\def\smallddots{\mathinner{\raise7pt\hbox{.}\raise4pt\hbox{.}\raise1pt\hbox{.}}}
\def\smallsdots{\mathinner{\raise1pt\hbox{.}\raise4pt\hbox{.}\raise7pt\hbox{.}}}
\DeclareMathOperator{\diag}{diag}
\DeclareMathOperator{\rank}{rank}
\numberwithin{equation}{section}
\numberwithin{table}{section}
\newtheorem{theorem}{Theorem}[section]
\newtheorem{lemma}{Lemma}[section]
\newtheorem{corollary}{Corollary}[section]
\newtheorem{fact}{Fact}[section]
\newtheorem{example}{Example}[section]
\newtheorem{definition}{Definition}[section]
\newtheorem{fig.}{FIGURE}[section]
\begin{document}
\title{\bf Fast Approximate 
Computations with 
Cauchy Matrices 
and Polynomials
\thanks  {Some results of this paper have been presented at 
ILAS'2013, Providence, RI, 2013,
at CASC'2013, Berlin, Germany, 2013,
and at
CSR'2014, Moscow, Russia, 2014.}
}

\author{Victor Y. Pan  \\
Departments of Mathematics and Computer Science \\
Lehman College and the Graduate Center of the City University of New York \\
Bronx, NY 10468 USA \\
victor.pan@lehman.cuny.edu \\
http://comet.lehman.cuny.edu/vpan/  \\
} 

 \date{}


\maketitle


\begin{abstract}


Multipoint polynomial evaluation  and interpolation
are fundamental for modern symbolic and numerical computing.
The known algorithms solve both problems  over any field
of constants in nearly linear  arithmetic time,
but the cost grows to quadratic for numerical solution. 
We fix  this discrepancy: our new
 numerical algorithms run in
 nearly linear arithmetic time. At first we restate our goals as the
multiplication of an $n\times n$ Van\-der\-monde matrix
 by a vector and the solution of a Van\-der\-monde linear system of $n$ equations.
Then we  transform the matrix into a  
 Cauchy structured matrix with some special features. By exploiting them,
we approximate the  matrix by a generalized hierarchically semiseparable matrix,
which is a structured matrix of a different class.
Finally we accelerate our solution to the original problems  by 
 applying Fast Multipole Method to the latter matrix. 
Our resulting numerical algorithms run in nearly optimal arithmetic time  
when they perform the above fundamental computations with polynomials,
Van\-der\-monde matrices,
 transposed Van\-der\-monde matrices,
and a large class of Cauchy and Cauchy-like matrices. 
Some of our techniques
may be of independent interest.
\end{abstract}


\paragraph{Key words:}
Polynomial  evaluation;
Rational evaluation; 
Interpolation;
Van\-der\-monde matrices;
Transformation of matrix structures;
Cauchy matrices;  
Fast Multipole Method;
HSS matrices; 
Matrix compression

\paragraph{AMS Subject Classification:}
12Y05, 15A04, 47A65, 65D05, 68Q25



\section{Introduction}\label{s1}


\subsection{The background and our progress}\label{sbcgr}


Multipoint  polynomial evaluation and interpolation are  
 fundamental for modern  symbolic and numerical computing.
 The known FFT-based algorithms run in nearly linear arithmetic time, but need quadratic time 
if the precision of computing is  restricted, e.g., to the IEEE standard double precision 
 (cf. \cite{BF00}, \cite{BEGO08}).
Our algorithms solve the problems in nearly linear 
arithmetic time even under such a restriction.

At first we restate the original tasks 
as the problems of multiplication of a 
Van\-der\-monde 
matrix
by a vector and the solution of a nonsingular
Van\-der\-monde linear system of equations, then 
transform the input   
matrix into a
matrix with the  structure of Cauchy type,   and 
finally
apply the
numerically stable FMM to a  generalized HSS matrix
that approximates the latter matrix.\footnote{``HSS" and ``FMM" are  the acronyms for ``Hierarchically Semiseparable"
and ``Fast Multipole Method".}
``Historically HSS representation
is just a special case of the representations commonly exploited in the
FMM literature" \cite{CDG06}. 
We refer the reader to
the books
  \cite{B10},
\cite{VVM},  \cite{EGH13},
and the bibliography therein
 for the  
FMM and the
 HSS matrices.

Our resulting fast algorithms apply to the following computational problems:

\begin{itemize}
\item
 multipoint  polynomial evaluation  and interpolation,
\item
multiplication by a vector  
of  
a Van\-der\-monde matrix,
its transpose,  
and, more generally,
matrices with the structures of Cauchy or  Van\-der\-monde  type,
\item
the solution of a linear system of equations with 
these coefficient matrices,
\item
rational interpolation 
and multipoint evaluation
associated with Cauchy matrix computations.
\end{itemize}

Some of our techniques
can be of independent interest
(cf. their extension in  
  \cite{Pa}).  

As in the papers \cite{MRT05},
 \cite{CGS07},  \cite{XXG12}, and
\cite{XXCB14}, we  count arithmetic operations
in the field
$\mathbb C$ of complex numbers with no rounding errors, but
 our algorithms are essentially reduced to
application of the  celebrated  algorithms 
of FFT and FMM,
having  stable numerical performance.


\subsection{Related works and our techniques}\label{srlt}


{\em Our progress can be viewed  as a new demonstration 
of the power of combining  the
transformation of matrix structures 
of \cite{P90}
with the
 FMM/HSS techniques.}

The paper  \cite{P90} has proposed some efficient techniques 
for the transformation of the four 
most popular matrix structures of Toeplitz, Hankel,
Cauchy, and Van\-der\-monde types into each other
and then showed that these techniques enable us to
readily extend any efficient algorithm
for the inversion of a matrix having one of these structures   
to efficient inversion of the matrices having 
structures of the three other types.
The  papers \cite{PSLT93} and \cite{PZHY97}
have extended these techniques to the acceleration of 
multipoint polynomial evaluation, but have not invoked
the  FMM and achieved only
 limited progress.  
 Short Section 9.2 of {\cite{PRT92} has pointed out 
some potential benefits of combining FMM
  with the algorithm of the paper \cite{G88},
but has not developed  that idea. 
The papers
\cite{P95} and \cite{DGR96} applied FMM and some other advanced techniques
in order to accelerate
approximate polynomial evaluation at a set of real points.

The closest neighbors of our present study
are the papers
 \cite{MRT05},
 \cite{CGS07},  \cite{XXG12}, \cite{XXCB14},  and \cite{P15}. The
former four papers
approximate the solution of
Toeplitz, Hankel, Toep\-litz-like, and Han\-kel-like linear systems
of equations in nearly linear arithmetic time,
 versus the 
  cubic time of the  classical numerical algorithms and 
the previous record quadratic time of \cite{GKO95}.
All five
papers \cite{GKO95},  
\cite{MRT05},
 \cite{CGS07},  \cite{XXG12}, and \cite{XXCB14} 
begin with the transformation of an input
 matrix 
into
a Cauchy-like one,
by specializing the cited technique
of \cite{P90}. 
 Then \cite{GKO95} 
continued by
exploiting the invariance of the Cauchy  
structure in 
row 
interchange, while the other four
papers apply the
numerically stable FMM in order
to operate efficiently with HSS approximations
of the basic
Cauchy matrix.

We incorporate
the powerful  FMM/HSS techniques,
but extend them nontrivially.
The papers \cite{GKO95},  
\cite{MRT05},
 \cite{CGS07},  \cite{XXG12}, and \cite{XXCB14}  
handle just the special 
Cauchy matrix $C=(\frac{1}{s_i-t_j})_{i,j=0}^{m-1,n-1}$ for which $m=n$,
$\{s_0,\dots,s_{n-1}\}$ is the set of the $n$-th
roots of unity and $\{t_0,\dots,t_{n-1}\}$ is
the set of the other $2n$-th roots of unity.
Our fast Van\-der\-monde multipliers and 
 solvers bring us to a
 subclass of Cauchy matrices
$C=(\frac{1}{s_i-t_j})_{i,j=0}^{m-1,n-1}$
rather than to a single matrix:
we still
assume that the 
knots $t_0,\dots,t_{n-1}$
are equally spaced on the unit circle, but 
impose no restriction on the knots $s_0,\dots,s_{m-1}$
and arrive at the matrices

\begin{equation}\label{eqcvm}
C_{{\bf s},f}=\Big (\frac{1}{s_i-f\omega^j}\Big )_{i,j=0}^{m-1,n-1},
\end{equation}
for any complex numbers $f,s_0,\dots,s_{m-1}$
and $\omega=\exp(2\pi\sqrt {-1}/n)$
denoting a primitive $n$th root of unity.

We call the matrices $C_{{\bf s},f}$ {\em CV matrices},
 link them to Van\-der\-monde 
matrices,
and devise efficient approximation algorithms
 that multiply a CV matrix by a vector, solve a 
nonsingular CV linear system of equations, and hence perform
multipoint polynomial evaluation and interpolation.
In order to achieve this progress, we 
work with
 extended HSS matrices,
associated with  CV matrices via
 a  proper partition of the complex plane: 
 we 
bound the numerical rank of the off-block-tri\-dia\-gonal 
blocks (rather than 
the off-block-dia\-gonal blocks, as is customary) and allow distinct rectangular  blocks to share
 row indices. Extension of the FMM/HSS techniques to such matrix classes 
was not straightforward and required additional care.

The paper \cite{P15} revisited the method of the 
transformation of matrix structures (traced back to  \cite{P90}), 
 recalled its techniques in some details, extended them,  and finally
outlined our present approach to polynomial  interpolation and multipoint 
evaluation
in order to demonstrate the power of that method once again. 
The paper included only one half of a page to 
  HSS matrices and about as much to
the reduction of the polynomial evaluation and interpolation
to computations with CV matrices.
No room has been left for  
the description of nontrivial computations  with generalized HSS matrices 
(having cyclic block tridiagonal part),
to which the original problems are reduced.
Furthermore  the 
competing fast algorithms for polynomial and rational 
interpolation and multipoint evaluation
of \cite{MB72}, \cite{H72}, and \cite{GGS87}
have not been cited.

 We fill this void by describing in some detail
the omitted algorithms for generalized HSS computation,
by linking polynomial and rational interpolation and multipoint 
 evaluation to CV matrices, 
by demonstrating the inherent numerical instability of the 
 algorithms of \cite{MB72}, \cite{H72}, and \cite{GGS87},
and by presenting some numerical tests, in particular for comparison of
  numerical stability of our algorithms with that 
of \cite{MB72}.
Also we more fully and more clearly cover the approximation 
of CV matrices by generalized HSS matrices.


\subsection{Organization of our paper}\label{sorg}

   
In the next section we recall some
 basic results for  matrix computations. In Section \ref{s3}
 we recall the problems of
polynomial and rational 
 evaluation and interpolation and
represent them
in terms of Van\-der\-monde,
Cauchy,  and CV matrices.
Sections 
 \ref{s2} and  \ref{s3}  (on the Background) make up Part I of our paper.

Sections 
  \ref{sqs1} and \ref{sextbd}  (on the Extended HSS Matrices) make up Part II,
where at first  we  recall the known algorithms for 
fundamental computations with
HSS matrices 
and then
extend the algorithms
to generalized HSS  matrices having cyclic block tridiagonal part.
Part II can be read independently of Section  \ref{s3}.

Sections 
 \ref{snrqs} and  \ref{scmplcv}  (on Computations with the CV Matrices and Extensions) 
make up Part 
III of the paper.
In Section \ref{snrqs} we approximate
a CV matrix by generalized HSS matrices
and estimate the complexity of the 
resulting numerical 
computations with CV matrices.
In Section \ref{scmplcv}
we  comment on the extensions and implementation
of our algorithms,
in particular  the extension to computations with 
Van\-der\-monde
 matrices and polynomials.
The results of Section \ref{snrqs}
imply our main results
because we have already 
 reduced  polynomial 
interpolation and  multipoint evaluation to computations with CV matrices
in Part I and have elaborated upon fast computations with generalized HSS matrices in Part II.
 
Part III 
uses Section \ref{s32} and equations (\ref{eqvs1}) 
 and (\ref{eqvs-1}) of Part I
(which support the cited reduction 
to CV matrices)
and 
 Theorem \ref{thalphbt} and
Corollary \ref{coext1} of Part II
(where we estimate the cost of computations with generalized HSS matrices),
but otherwise can be read independently of Parts I  and II.

Sections \ref{ststs} and \ref{scnc}
make up Part IV of the paper.
In  Section \ref{ststs} we report the results of our numerical tests.
In Section \ref{scnc}
 we briefly summarize our study.


\bigskip
\bigskip

{\huge PART I: BACKGROUND}


\section{Definitions and auxiliary results}\label{s2}


\subsection{Some basic definitions for matrix computations}\label{sbdef}


$O=O_{m,n}$ is the $m\times n$ matrix filled with zeros.
$I=I_n$ is the $n\times n$ identity matrix.

$M^T$ is the transpose of a matrix $M$, 
$M^H$ is its Hermitian transpose. 

$\diag(B_0,\dots,B_{k-1})=\diag(B_j)_{j=0}^{k-1}$
is a $k\times k$ block diagonal matrix 
with diagonal blocks $B_0,\dots,B_{k-1}$.

Both $(B_0~\dots~B_{k-1})$ and 
$(B_0~|~\dots~|~B_{k-1})$
denote a $1\times k$ block matrix with
$k$ blocks $B_0,\dots,B_{k-1}$.


$||M||=||M||_2$ denotes the spectral norm of a matrix $M$.

For an $m\times n$ 
matrix $M=(m_{i,j})_{i,j=0}^{m-1,n-1}$,
 write $|M|=\max_{i,j}|m_{i,j}|$, and so $||M||\le \sqrt{mn}~|M|$,
but for a set $\mathcal S$ we write $|\mathcal S|$ to denote its cardinality.

An $m\times n$  matrix $U$ is  {\em unitary} 
 if $U^HU=I_n$ or $UU^H=I_m$, and then
$||U||=1$.

 ``$\ll$"  stands for ``much less" quantified in context.


\subsection{Submatrices, rank, and generators}\label{ssrg}


An $m\times n$ matrix $M$ has a nonunique {\em generating pair 
$(F,G^T)$ of
a length} $\rho$
if $M=FG^T$ for two matrices $F$ of size $m\times \rho$
and $G$ of size $n\times \rho$.
 The  minimum
length of a generating pair of a matrix is equal to its rank.

$\mathcal R(B)$ and $\mathcal C(B)$ are the  index sets
of the rows and columns of its submatrix $B$, respectively.
For two sets $\mathcal I\subseteq \{1,\dots,m\}$ and $\mathcal J\subseteq \{1,\dots,n\}$,
define the submatrix
 $B=M(\mathcal I,\mathcal J)=(m_{i,j})_{i\in \mathcal I,j\in \mathcal J}$
such that 
  $\mathcal R(B)=\mathcal I$ and  $\mathcal C(B)=\mathcal J$.
Write
$M(\mathcal I,.)=M(\mathcal I,\mathcal J)$
 if $\mathcal J=\{1,\dots,n\}$.
Write $M(.,\mathcal J)=M(\mathcal I,\mathcal J)$
 if $\mathcal I=\{1,\dots,m\}$.

\begin{theorem}\label{thgnrt}  
A matrix $M$ has rank at least $\rho$
if and only if it has a 
nonsingular $\rho\times \rho$ submatrix $M(\mathcal I,\mathcal J)$.
 
If $\rank (M)=\rho$, then
$M=M(.,\mathcal I)M(\mathcal I,\mathcal J)^{-1}M(\mathcal J,.)$.
\end{theorem}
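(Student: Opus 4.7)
The plan is to handle the two assertions in turn: first the rank characterization via existence of a nonsingular $\rho\times\rho$ submatrix, then the explicit factorization.

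For the equivalence, the ``if'' direction is immediate: a nonsingular $\rho\times\rho$ submatrix of $M$ has $\rho$ linearly independent rows, and these rows are obtained by restricting $\rho$ rows of $M$ to a subset of columns, so those $\rho$ rows of $M$ are themselves linearly independent, forcing $\rank(M)\ge\rho$. For the ``only if'' direction I would use that row rank equals column rank: if $\rank(M)\ge\rho$, pick $\rho$ linearly independent columns of $M$ indexed by a set $\mathcal J$; the resulting $m\times\rho$ submatrix $M(.,\mathcal J)$ then has column rank $\rho$, hence row rank $\rho$, so one can select a row index set $\mathcal I$ of cardinality $\rho$ along which the rows of $M(.,\mathcal J)$ are linearly independent. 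This yields a nonsingular $\rho\times\rho$ submatrix $M(\mathcal I,\mathcal J)$.

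For the factorization, assume $\rank(M)=\rho$ and that $M(\mathcal I,\mathcal J)$ is a nonsingular $\rho\times\rho$ submatrix. Because the rows of the $\rho\times n$ submatrix $M(\mathcal I,.)$ contain $\rho$ independent entries in the columns indexed by $\mathcal J$, these rows are themselves linearly independent, and being $\rho$ in number in a space of row-rank $\rho$, they form a basis of the row space of $M$. Consequently there exists a unique $m\times\rho$ coefficient matrix $X$ with $M=X\,M(\mathcal I,.)$. Restricting both sides to the columns indexed by $\mathcal J$ gives $M(.,\mathcal J)=X\,M(\mathcal I,\mathcal J)$, whence $X=M(.,\mathcal J)\,M(\mathcal I,\mathcal J)^{-1}$, which when substituted back yields the claimed identity
\[
M=M(.,\mathcal J)\,M(\mathcal I,\mathcal J)^{-1}\,M(\mathcal I,.).
\]

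There is really no deep obstacle; the only point that requires a line of care is verifying that the $\rho$ rows of $M$ indexed by $\mathcal I$ are actually linearly independent and therefore do span the row space — this is precisely what the non-singularity of $M(\mathcal I,\mathcal J)$, a submatrix of $M(\mathcal I,.)$, guarantees. Once that is in hand, the rest is just coordinate identification obtained by evaluating the relation $M=X\,M(\mathcal I,.)$ on the $\mathcal J$-columns to solve for $X$.
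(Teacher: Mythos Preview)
Your proof is correct. The paper states this theorem without proof, treating it as a standard linear-algebra fact (it is the classical skeleton, or CUR, decomposition). Your argument --- first picking $\rho$ independent columns, then $\rho$ independent rows within those columns, and finally solving for the coefficient matrix $X$ by restricting the identity $M=X\,M(\mathcal I,.)$ to the $\mathcal J$-columns --- is exactly the textbook route, and the point you flag (that nonsingularity of $M(\mathcal I,\mathcal J)$ forces the $\mathcal I$-rows of $M$ to be independent and hence to span the row space) is indeed the only place requiring a word of justification.

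One remark worth recording: the paper's displayed formula reads $M=M(.,\mathcal I)\,M(\mathcal I,\mathcal J)^{-1}\,M(\mathcal J,.)$, with $\mathcal I$ and $\mathcal J$ interchanged in the outer factors. Since the paper declares $\mathcal I\subseteq\{1,\dots,m\}$ as a row-index set and $\mathcal J\subseteq\{1,\dots,n\}$ as a column-index set, that version is dimensionally ill-formed; your corrected identity $M=M(.,\mathcal J)\,M(\mathcal I,\mathcal J)^{-1}\,M(\mathcal I,.)$ is the one that makes sense and is what your argument actually establishes.
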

The theorem defines 
two generating pairs
 $(M(.,\mathcal I),M(\mathcal I,\mathcal J)^{-1}M(\mathcal J,.)$
 and 
$(M(.,\mathcal I)M(\mathcal I,\mathcal J)^{-1},M(\mathcal J,.)$ and
a {\em generating triple}
$(M(.,\mathcal I),M(\mathcal I,\mathcal J)^{-1},M(\mathcal J,.))$ 
 of a length $\rho$
for a matrix  $M$. 
We call
such pairs and triples 
{\em generators}.
One can obtain some generators of the minimum length for a given matrix
by computing its SVD $U\Sigma V$ or its less costly rank revealing factorizations
such as ULV and URV factorizations in \cite{CGS07}, \cite{XXG12}, 
and \cite{XXCB14}, where 
the  matrices U and V 
are unitary, $\Sigma $ is diagonal, and L and R 
are triangular (cf. \cite[Section 5.6.8]{GL13}).
For efficient  alternative techniques, 
some of which use randomization or heuristics,
see
\cite{GOS08},
\cite{GT01}, 
\cite{HMT11},
\cite{LWMRT},
\cite{M11},
\cite{M11a},
\cite{PQY15},
\cite{T00},
\cite{W14},
\cite{XXG12},
and the references therein.


\subsection{Small-norm approximation and 
perturbation}\label{seps}

Hereafter we deal with perturbations within a positive tolerance 
$\xi$. 
(One may think of machine epsilon, but in this paper we just assume that
$\xi$ is small in context.)

A matrix $\tilde M$ is a $\xi$-{\em approximation} of
a matrix $M$ if $||\tilde M-M||\le \xi||M||$.

A $\xi$-generator of a matrix $M$
is a generator of its $\xi$-approximation.

The $\xi$-{\em rank} of a matrix $M$ is
the integer
$\min_{||\tilde M-M||\le \xi||M||} \rank (\tilde M)$.

A matrix $M$ is {\em ill-con\-di\-tioned} if its rank exceeds its
numerical rank. 


\section{Polynomial and rational
evaluation and interpolation as
operations with
structured  matrices}\label{s3}


\subsection{Four classes of
structured  matrices. Cauchy and Van\-der\-monde matrices}\label{s31}


Recall the four classes of highly  popular 
structured matrices, that is,
{\em Toeplitz} matrices
 $T=\left(t_{i-j}\right)_{i,j=0}^{m-1,n-1}$,
{\em Hankel} matrices
 $H=\left(h_{i+j}\right)_{i,j=0}^{m-1,n-1}$,
{\em Van\-der\-monde} matrices
$V=V_{\bf s}=(s_i^{j})_{i,j=0}^{m-1,n-1}$,  and 
{\em Cauchy} matrices 
$C=C_{{\bf s},{\bf t}}=\Big(\frac{1}{s_i-t_j}\Big)_{i,j=0}^{m-1,n-1}$. 
(Some authors call the transpose $V^T$ a Van\-der\-monde matrix.)
The $mn$ entries of such a structured $m\times n$
 matrix are
defined by at most $m+n$
 parameters.

These classes have been extended to the four 
more general classes of  matrices having structures of 
Toeplitz, Hankel, Van\-der\-monde, and Cauchy types.
Each such an $m\times n$ matrix is naturally defined by its
{\em displacement generator} $FH$ 
where  $F$ and  $G$ are $m\times d$ 
and $d\times n$ matrices, respectively, and  where 
$d\ll \min\{m,n\}$, that is,  $\min\{m,n\}$
 exceeds greatly the integer $d$    
(cf. \cite{P01},  \cite{P15}).

We mostly work with Van\-der\-monde and Cauchy matrices
and next recall some of their basic properties.

The scalars $s_0,\dots,s_{m-1},t_0,\dots,t_{n-1}$ 
define the Van\-der\-monde and 
Cauchy matrices $V_{\bf s}$ and  $C_{\bf s,t}$, and 
we call them {\em knots}.   
If we shift 
the knots of a Cauchy matrix or
 scale them by a constant, we arrive at a Cauchy matrix
again:
$aC_{a{\bf s},a{\bf t}}=C_{\bf s,t}$ for $a\neq 0$ and
$C_{{\bf s}+a{\bf e},{\bf t}+a{\bf e}}=C_{\bf s,t}$ for
${\bf e}=(1,\dots,1)^T$.


\begin{theorem}\label{thcvdet}
(i) An $m\times n$ Van\-der\-monde matrix $V_{\bf s}=(s_i^{j})_{i,j=0}^{m-1,n-1}$
 has full rank if and only if all $m$ knots  $s_0,\dots,s_{m-1}$
are distinct. 
(ii)  An $m\times n$ Cauchy matrix $C_{\bf {s,t}}=
\Big(\frac{1}{s_i-t_j}\Big)_{i,j=0}^{m-1,n-1}$ 
is well defined and has full rank if and only if all its  $m+n$ knots
are distinct.
\end{theorem}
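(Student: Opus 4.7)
The plan is to reduce both parts to the classical determinant identities for square Vandermonde and square Cauchy matrices, combined with Theorem~\ref{thgnrt}.

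For part (i), the ``only if'' direction is immediate: if $s_i = s_j$ for distinct indices $i, j$, then rows $i$ and $j$ of $V_{\bf s}$ are identical, so the rank drops below $m$. For the ``if'' direction I would exhibit a nonsingular square submatrix of size $k = \min(m, n)$. Choose the first $k$ columns, and, if $m > n$, any $n$ of the rows; the resulting square Vandermonde matrix has determinant $\prod_{0 \le i < j \le k-1} (s_{\pi(j)} - s_{\pi(i)})$ by the classical Vandermonde formula, where $\pi$ indexes the chosen rows. This product is nonzero under the distinctness hypothesis, so by Theorem~\ref{thgnrt} the matrix $V_{\bf s}$ has rank $k$.

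For part (ii), well-definedness of $C_{\bf s, t}$ is equivalent to $s_i \ne t_j$ for all $i, j$, which is part of the hypothesis that all $m+n$ knots are distinct. For necessity, note that two equal $s$-knots yield two equal rows and two equal $t$-knots yield two equal columns, either of which reduces the rank below $\min(m, n)$. For sufficiency, I would form the square submatrix $C_{\bf s', t'}$ by selecting any $k = \min(m, n)$ rows and columns and invoke the classical Cauchy determinant identity
\[
\det C_{\bf s', t'} \;=\; \frac{\prod_{0 \le i < j \le k-1}(s'_j - s'_i)(t'_i - t'_j)}{\prod_{i, j = 0}^{k-1}(s'_i - t'_j)}.
\]
The numerator is nonzero because the selected $s'$- and $t'$-knots are distinct, while the denominator is nonzero and finite because no $s'$-knot coincides with any $t'$-knot; hence the submatrix is nonsingular, and Theorem~\ref{thgnrt} again yields full rank.

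The only mildly nontrivial ingredient is the Cauchy determinant formula, which I would either cite or derive briefly by induction: subtract the last row from each previous row, pull the resulting common factors $(s_i - s_{k-1})/[(s_i - t_j)(s_{k-1} - t_j)]$ out of the modified entries, and reduce to a Cauchy determinant of order $k-1$. Once both determinant identities are in hand, the whole argument is textbook, so I do not foresee any real obstacle.
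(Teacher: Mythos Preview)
The paper states Theorem~\ref{thcvdet} without proof, treating it as a classical background fact, so there is no argument in the paper to compare against. Your route via the Vandermonde and Cauchy determinant identities, combined with Theorem~\ref{thgnrt}, is the standard textbook proof and handles the ``if'' direction in both parts cleanly.

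Your ``only if'' arguments have a small gap in the genuinely rectangular case. You argue that $s_i=s_j$ makes two rows equal and hence forces the rank below $m$; but when $m>n$, full rank means rank $=n$, and rank $<m$ does not rule that out. In fact, as literally stated, the ``only if'' direction of part~(i) fails for $m>n$: with $m=3$, $n=2$, $s_0=s_1=0$, $s_2=1$, the $3\times 2$ Vandermonde matrix has rank $2=\min(m,n)$ despite the repeated knot. The analogous issue appears in part~(ii) by row/column symmetry. The paper's statement is evidently meant in the regime $m\le n$ (or, equivalently, is a statement about the nonsingularity of the natural square minors); under that reading your argument is complete, so you should simply make that assumption explicit rather than change the strategy.
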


The four cited matrix structures have quite distinct features.
In particular 
the 
matrix
structure of Cauchy type
is invariant in 
row and column interchange,
in contrast to the structures of
Toeplitz and Hankel types. 
This structure is stable in 
shifting and scaling its basic knots 
unlike the structure of   Van\-der\-monde type. 

The paper \cite{P90}, however,
has transformed the matrices of any of the four classes
into the matrices of the three other classes simply by means of multiplication by 
Hankel, Van\-der\-monde, and transposed or inverse Van\-der\-monde matrices. 
Then the paper has showed that
 such transforms
{\em readily extend any efficient matrix inversion
algorithm for matrices of one of the four classes to the matrices of 
 the three other classes,} and similarly for the computation of determinants and the solution of linear systems of equations.  

Presently we apply a simple specialization of this general technique 
for devising efficient 
approximation algorithms for  Van\-der\-monde matrix computations
 linked to polynomial evaluation and interpolation.


\subsection{Four computational problems}\label{s32}


{\bf Problem 1.
 Multipoint 
Polynomial evaluation or Van\-der\-monde-by-vec\-tor multiplication.}


\noindent INPUT: $m+n$ complex scalars $p_0,\dots,p_{n-1};
s_0,\dots,s_{m-1}$.


\noindent OUTPUT: $m$ complex scalars $v_0,\dots,v_{m-1}$ satisfying
$v_i=p(s_i)$ for $p(x)=p_0+p_1x+\cdots+p_{n-1}x^{n-1}$ and $i=0,\dots,m-1$
or equivalently
$V{\bf p}={\bf v}$ for $V=V_{\bf s}=(s_i^{j})_{i,j=0}^{m-1,n-1}$,
${\bf p}=(p_j)_{j=0}^{n-1}$, and ${\bf v}=(v_i)_{i=0}^{m-1}$.



{\bf Problem 2. Polynomial interpolation or the solution of a Van\-der\-monde linear system.}


\noindent INPUT: $2n$ complex scalars $v_0,\dots,v_{n-1};s_0,\dots,s_{n-1}$,
 the last $n$ of them distinct.


\noindent OUTPUT: $n$ complex scalars $p_0,\dots,p_{n-1}$ satisfying the above
equations 
for $m=n$.

\medskip

{\bf Problem 3. Multipoint rational evaluation or Cau\-chy-by-vec\-tor multiplication.}

\noindent INPUT: $2m+n$ complex scalars $s_0,\dots,s_{m-1};
t_0,\dots,t_{n-1};v_0,\dots,v_{m-1}$.

\noindent OUTPUT: $m$ complex scalars $v_0,\dots,v_{m-1}$ satisfying
$v_i=\sum_{j=0}^{n-1}\frac{u_j}{s_i-t_j}$ for $i=0,\dots,m-1$
or equivalently
$C{\bf u}={\bf v}$ for $C=C_{{\bf s},{\bf t}}=
\Big(\frac{1}{s_i-t_j}\Big)_{i,j=0}^{m-1,n-1},~
{\bf u}=(u_j)_{j=0}^{n-1}$, and ${\bf v}=(v_i)_{i=0}^{m-1}$.


{\bf Problem 4. Rational interpolation or the solution of a Cauchy linear system of equations.}

\noindent INPUT: $3n$ complex scalars 
$s_0,\dots,s_{n-1};t_0,\dots,t_{n-1};v_0,\dots,v_{n-1}$,
the first $2n$ of them distinct.

\noindent OUTPUT: $n$ complex scalars $u_0,\dots,u_{n-1}$ satisfying 
the above equations 
for $m=n$.


\subsection{The arithmetic  complexity of Problems 1--4}\label{s33}


The  algorithm
of  \cite{MB72}
solves  Problem 1 by using $O((m+n)\log^2(n)\log(\log (n)))$ arithmetic  operations.
This complexity bound  has been extended
to the solution of 
Problems 2 in \cite{H72}, 3 in \cite{GGS87}, and 4
(see equation (\ref{eqfhr}) below)
and
is within a  factor of $\log (n)\log(\log (n))$ from the optimum
 \cite{BM75}.

The cited algorithms supporting this bound
require extended precision of computing 
and fail already for the input polynomials of moderate degree if
the precision is restricted to the IEEE standard double precision
(cf. Table \ref{SuperfastEval}). 
The approach relies heavily on  computing with extended precision.
Already the fast 
polynomial division algorithm 
requires computations with high precision
for the worst case input, and the problem
is aggravated  in the recursive 
 fan-in processes of polynomial multiplication and division 
in the algorithms of \cite{MB72}, \cite{H72}, and  \cite{GGS87}.
Moreover, the following argument demonstrates that we  must add  at least
$n$  bits of precision when these algorithms
compute the Lagrange auxiliary polynomial 
with the roots $s_0,\dots,s_{n-1}$.

\medskip

{\bf Problem 5. Computation of the polynomial coefficients from its roots.}

\noindent INPUT: $n$ complex scalars $s_0,\dots,s_{n-1}$.

\noindent OUTPUT: the coefficients of
the  polynomial
$l(x)=\prod_{i=0}^{n-1}(x-s_i)$.

\medskip

In order to observe the need for 
the precision increase, notice that the constant coefficient 
has absolute value $\prod_{j=0}^{n-1}|s_i|$,
which turns into $2^n$ if, say, $s_i=2$  for all $i$, but
the coefficient of $x^{\lfloor n/2\rfloor}$ has the order of
$2^n$ even if $s_i=1$  for all $i$.  
The restriction of using bounded (e.g., double)
precision of computing rules out using the cited fast algorithms,
and the known double precision algorithms for Problems 1--4
require quadratic arithmetic time 
 (cf. 
 \cite{BF00},
 \cite{BEGO08}). 

This pessimistic outcome, however, 
does not apply to
the important special case  where the  knots $s_i$
 are the $n$th roots of 1, that is, where
$s_i=\omega^{i}$ for
 $\omega=\omega_n=\exp(2\pi \sqrt{-1}/n)$,
 $i=0,\dots,n-1$.
In this case, 
$V_{\bf s}=(\omega^{ij})_{i,j=0}^{m-1,n-1}$ and
 Problems 1 (for $m=n$) and 2 turn into the computation of
the forward and inverse 
{\em discrete Fourier transforms}, respectively.
Hereafter we use the acronyms {\em DFT} and {\em IDFT}
and write 
$\Omega=\frac{1}{\sqrt n}(\omega^{ij})_{i,j=0}^{n-1}$.
Notice that 
 $\Omega=\Omega^T$  and
$\Omega^H=\Omega^{-1}=
\frac{1}{\sqrt n}(\omega^{-ij})_{i,j=0}^{n-1}$
 are unitary matrices.
Based on FFT, one  can perform the DFT and IDFT, that is, 
can solve
 Problems 1 and 2 in this special case,
 by using  
 bounded precision of computing
and involving only $O(n\log (n))$ arithmetic operations  \cite[Problem 2.4.2]{P01}.
  

\subsection{Cauchy--Van\-der\-monde links and their impact on Problems 1 and 2}\label{s34}


The following equation, traced to \cite{K68} on \cite[page 110]{P01}, links
 Problems 1 and 2 to
Cauchy matrices, 
\begin{equation}\label{eqfhr}
C_{\bf s,t}=\diag(l(s_i)^{-1})_{i=0}^{m-1}V_{\bf s}V^{-1}_{\bf t}\diag(l'(t_j))_{j=0}^{n-1},~
l(x)=\prod_{j=0}^{n-1}(x-t_j).
\end{equation}

For ${\bf t}=f\cdot(\omega^{j})_{j=0}^{n-1}$, $f\neq 0$, the knots $t_j$ are the scaled $n$th 
roots of 1,
$l(x)=x^n-f^n$, $l'(x)=nx^{n-1}$,
$V_{\bf t}=\sqrt n~\Omega\diag(f^{j})_{j=0}^{n-1}$,
$V_{\bf t}^{-1}=\frac{1}{\sqrt n}\diag(f^{-j})_{j=0}^{n-1}\Omega^H$. 
Likewise for ${\bf s}=e\cdot(\omega^{i})_{i=0}^{n-1}$, $e\neq 0$, the knots $s_i$ are the scaled $n$th 
roots of 1, $V_{\bf s}=\sqrt n~\Omega\diag(e^{i})_{i=0}^{n-1}$ and
 $V_{\bf s}^{-1}=\frac{1}{\sqrt n}\diag(e^{-j})_{j=0}^{n-1}\Omega^H$.

Write $C_{{\bf s},f}=(\frac{1}{s_i-f\omega^j})_{i,j=0}^{m-1,n-1}$ for $f\neq 0$
and  $C_{e,{\bf t}}=(\frac{1}{e\omega^i-t_j})_{i,j=0}^{m-1,n-1}$ for  $e\neq 0$
and obtain from 
(\ref{eqfhr}) that
%
 

\begin{equation}\label{eqvs1}
V_{\bf s}=~
\frac{f^{1-n}}{\sqrt n}\diag\Big(s^n_i-f^n\Big)_{i=0}^{m-1}C_{{\bf s},f}\diag(\omega^{j})_{j=0}^{n-1}\Omega\diag(f^{j})_{j=0}^{n-1},
\end{equation}
\begin{equation}\label{eqvt-1}
V_{\bf t}^{-1}=~
\frac{1}{\sqrt n}~\diag(e^{-i})_{i=0}^{m-1}\Omega^H\diag(l(e^i))_{i=0}^{m-1}C_{e,{\bf t}}\diag \Big (\frac{1}{l'(t_j)}\Big )_{j=0}^{n-1},~{\rm and}
\end{equation}

\begin{equation}\label{eqvs-1}
V_{\bf s}^{-1}=
\sqrt n\diag(f^{-j})_{j=0}^{n-1}\Omega^H\diag(\omega^{-j})_{j=0}^{n-1}C_{{\bf s},f}^{-1}\diag\Big(\frac{f^{n-1}}{s^n_i-f^n}\Big)_{i=0}^{n-1}~{\rm for}~m=n.
\end{equation}
These  expressions
link 
Van\-der\-monde matrices and their inverses
to the $m\times n$ CV matrices  
$C_{{\bf s},f}$
of equation (\ref{eqcvm}) and
the  $n\times m$ {\em CV}$^T$ {\em matrices} 
$C_{e,{\bf t}}=-C^T_{{\bf t},e}=\Big(\frac{1}{e\omega^i-t_j}\Big)_{i,j=0}^{n-1,m-1}$ (for $e\neq 0$), that is, Cauchy matrices with 
an arbitrary  knot set $\mathcal T=\{t_0,\dots,t_{n-1}\}$
and with the knot set $\mathcal S=\{s_i=e\omega^{i},~i=0,\dots,m-1\}$.
More details on the subjects of this section can be found in \cite{Pb}.


\medskip

\medskip
\medskip
\medskip
\medskip

{\huge PART II: EXTENDED HSS MATRICES}


\section{Quasi\-separable and HSS matrices}\label{sqs1}



\subsection{Quasi\-separable  matrices
and generators}\label{shss}


\begin{definition}\label{defhss0}
Suppose that an $m\times n$ matrix $M$
is represented as a  $k\times k$ block matrix
with 
a block diagonal 
 $\widehat \Sigma=(\Sigma_0,\dots,\Sigma_{k-1})$. 
 Let
 $\chi(\widehat \Sigma)$ denote 
the overall number of the entries of
all its $k$ diagonal blocks $\Sigma_0,\dots,\Sigma_{k-1}$
and let
$\chi(\widehat \Sigma)\ll mn$, that is,
let $mn$  greatly  exceed
$\chi(\widehat \Sigma)$.
Furthermore let $l$ and  $u$ denote the maximum ranks of the sub-
and superdiagonal
 blocks of the matrix $M$, respectively. 
Then the matrix $M$ is 
 $(l,u)$-{\em quasi\-separable}.
By replacing ranks with $\xi$-ranks we
 define a $(\xi,l,u)$-{\em quasi\-separable}
matrix.
\end{definition}

The definition generalizes the class of banded matrices and their inverses:
 a matrix having a lower  bandwidth $l$
and an upper bandwidth $u$ as well as its inverse
(if defined) are $(l,u)$-quasi\-separable.

In order to operate with 
$(l,u)$-quasi\-separable  matrices
 efficiently, one exploits their  
representation 
 with {\em quasi\-separable generators}, 
demonstrated  
by the following
$4\times 4$ 
example and 
defined below in general form,


\begin{equation}\label{eqqus}
M=\begin{pmatrix}
\Sigma_0  &  S_0T_1   &   S_0B_1T_2~  &   S_0B_1B_2 T_3  \\
P_1Q_0  &  \Sigma_1   &   S_1T_2  &     S_1B_2T_3   \\
P_2A_1Q_0  &   P_2Q_1   &   \Sigma_2   &    S_2T_3   \\
P_3A_2A_1Q_0  & ~P_3A_2Q_1   &  P_3 Q_2   &  \Sigma_3
\end{pmatrix}.
\end{equation}

By generalizing this example we arrive at the following definition.

\begin{definition}\label{defhssgen} (Cf. Table \ref{tabgen}.)
Suppose that an $m\times n$ matrix $M$
is represented as a  $k\times k$ block matrix with 
a block diagonal 
 $\widehat \Sigma=(\Sigma_0,\dots,\Sigma_{k-1})$ such that 
$\chi(\widehat \Sigma)\ll mn$.
(We reuse these assumptions of Definition \ref{defhss0}.)

Furthermore suppose that 
a set $\{\mathcal I_1,\dots,\mathcal I_k\}$ partitions
the set $\{1,\dots,m\}$; 
a set $\{\mathcal J_1,\dots,\mathcal J_k\}$ partitions
the set $\{1,\dots,n\}$, and
there exists
 a six-tuple 
$\{P_i$, $Q_h$, $S_h$, $T_i$, $A_g$, $B_g\}$ such that
$M(\mathcal I_i,\mathcal J_h)=P_iA_{i-1}\cdots A_{h+1}Q_h$ and 
$M(\mathcal I_h,\mathcal J_i)=S_hB_{h+1}\cdots B_{i-1}T_i$ 
for $0\le h<i<k$.    

Here $P_i$, $Q_h$, and $A_g$ are $|\mathcal I_i|\times l_i$,
$l_{h+1}\times |\mathcal J_h|$, and $l_{g+1}\times l_g$  matrices,
respectively, and

$S_h$, $T_i$ and $B_g$ are $|\mathcal I_h|\times u_{h+1}$,
$u_{i}\times |J_i|$, and $u_g\times u_{g+1}$
  matrices,
respectively, \\
for $g=1,\dots,k-2$, $h=0,\dots,k-2$, $i=1,\dots,k-1$.

Then the six-tuple $\{P_i$, $Q_h$, $S_h$, $T_i$, $A_g$, $B_g\}$ 
is an $(l,u)$-{\em quasi-separable generator} 
of the matrix $M$,
and the integers 
$l=\max_g\{l_g\}$
and $u=\max_h\{u_h\}$
are the {\em lower and upper lengths}
or {\em orders}
of this generator.
\end{definition} 


\begin{table}[ht]
  \caption{The sizes of quasi\-separable generators of Definition
\ref{defhssgen}}
  \label{tabgen}
  \begin{center}
    \begin{tabular}{| c | c | c | c | c | c | c |}
      \hline
      $P_i$ & $Q_h$ & $A_g$ & $S_h$ & $T_i$ & $B_g$ \\ \hline
 $|\mathcal I_i|\times l_i$ & $l_{h+1}\times |\mathcal J_h|$ & $l_{g+1}\times l_g$ & 
$|\mathcal I_h|\times u_{h+1}$ & $u_{i}\times |J_i|$ & $u_g\times u_{g+1}$ \\ \hline
    \end{tabular}
  \end{center}
\end{table}



 \begin{theorem}\label{thhss1}
(Cf. \cite{B10}, \cite{VVM},  
\cite{X12},  \cite{EGH13}, and the bibliography therein.)
A matrix $M$ is $(l,u)$-quasi-separable
if and only if it has a (nonunique) representation 
via $(l,u)$-quasi-separable generators.
 \end{theorem}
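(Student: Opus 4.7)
The claim is a biconditional, so I would prove the two directions separately; the forward direction (``generators $\Rightarrow$ quasi-separability'') is essentially bookkeeping, while the reverse direction requires a genuine construction from rank-revealing factorizations of the Hankel blocks.

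\textbf{Sufficiency.} Suppose $M$ admits a generator six-tuple $\{P_i, Q_h, S_h, T_i, A_g, B_g\}$ as in Definition \ref{defhssgen}. Pick any $i \in \{1,\dots,k-1\}$ and consider the full subdiagonal Hankel block $L_i := M(\mathcal{I}_i \cup \cdots \cup \mathcal{I}_{k-1}, \mathcal{J}_0 \cup \cdots \cup \mathcal{J}_{i-1})$. Using $M(\mathcal{I}_j, \mathcal{J}_h) = P_j A_{j-1} \cdots A_{h+1} Q_h$ and splitting each chain at the index $i$, one obtains the factorization
\[
L_i = \begin{pmatrix} P_i \\ P_{i+1} A_i \\ \vdots \\ P_{k-1} A_{k-2} \cdots A_i \end{pmatrix} \bigl( A_{i-1} \cdots A_1 Q_0,\; A_{i-1} \cdots A_2 Q_1,\; \dots,\; Q_{i-1} \bigr),
\]
whose inner dimension is $l_i \le l$. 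Hence every subdiagonal Hankel block has rank at most $l$, and by the mirror argument applied to $S_h, T_i, B_g$ every superdiagonal Hankel block has rank at most $u$, giving $(l,u)$-quasi-separability.

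\textbf{Necessity.} For the converse I would construct the lower-triangular generators $(P_i, Q_h, A_g)$; the upper-triangular six-tuple is handled identically. The key observation is that the sequence of Hankel blocks $L_1, L_2, \dots, L_{k-1}$ is nested: if one strips the top block-row $\mathcal{I}_i$ from $L_i$ and restricts the result to its first $|\mathcal{J}_0| + \cdots + |\mathcal{J}_{i-1}|$ columns, one recovers a submatrix of $L_{i+1}$. Applying any rank-revealing factorization---the SVD, or a URV/ULV factorization as cited in Section \ref{ssrg}---to each $L_i$ yields $L_i = \Phi_i \Psi_i$ with $\Phi_i$ of width $l_i := \mathrm{rank}(L_i) \le l$. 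Read off $P_i$ as the top $|\mathcal{I}_i|$ rows of $\Phi_i$ and $Q_{i-1}$ as the last $|\mathcal{J}_{i-1}|$ columns of $\Psi_i$. The transition matrix $A_g$ is then defined by the requirement that the bottom rows of $\Phi_g$ (those below the $\mathcal{I}_g$-block) be expressed via $A_g$ in terms of the columns of $\Phi_{g+1}$; existence of an $A_g$ of the required size $l_{g+1} \times l_g$ follows because, by the nesting, the two column spaces coincide inside the common submatrix of $M$.

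\textbf{Main obstacle.} The technical crux is compatibility: rank-revealing factorizations chosen independently at each level need not glue together, and the transition matrices $A_g$ have the dimensions prescribed by Table \ref{tabgen} only thanks to the inclusion of column spaces across consecutive Hankel blocks. Keeping this indexing straight---together with the mirror construction for $(S_h, T_i, B_g)$ on the superdiagonal side---is the bulk of the work; everything else is mechanical, and since the theorem only asserts existence (with explicit non-uniqueness), the detailed construction can be imported from \cite{EGH13} or \cite{B10}.
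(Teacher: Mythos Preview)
Your argument is sound, but note that the paper itself does not supply a proof of this theorem at all: it simply cites \cite{B10}, \cite{VVM}, \cite{X12}, and \cite{EGH13} and moves on. What you have written is precisely the standard construction that appears in those references (most explicitly in \cite{EGH13}), so in that sense you are not deviating from the paper's ``approach''---you are filling in what the paper delegated to the literature.

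Two small remarks. First, your sufficiency computation actually proves a sharper fact than Definition~\ref{defhss0} literally requires: you bound the rank of the full lower Hankel block $L_i$, not just of individual sub-diagonal blocks. This is the correct move, because the biconditional is only true under the Hankel-block reading of ``sub- and superdiagonal blocks'' in Definition~\ref{defhss0}; indeed, that reading is exactly what the paper uses later in the proof of Theorem~\ref{thhssqs}. Second, in the necessity step the existence of $A_g$ with the prescribed size $l_{g+1}\times l_g$ hinges on the fact that in a rank factorization $L_i=\Phi_i\Psi_i$ the factor $\Psi_i$ has full row rank, so that the column space of the truncated $\tilde\Phi_i$ coincides with the column space of $\tilde\Phi_i\Psi_i$ and hence lies inside the column space of $\Phi_{i+1}$. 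You allude to this (``the two column spaces coincide''), and it is worth making that one line explicit, but otherwise the construction goes through exactly as you describe.
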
 

By virtue of this theorem one can redefine the $(l,u)$-quasi\-separable matrices as 
those 
 representable 
with the  
families
of quasi\-separable generators
 $\{P_h$, $Q_i$,  $A_g\}$ and $\{S_h$, $T_i$, $B_g\}$
that have
lower and upper orders
  $l$ and $u$,
respectively.
 Definitions \ref{defhss0}
and  \ref{defhssgen} provide
 two  useful insights 
into the properties of  
these matrices.
The third equivalent 
definition in Section \ref{snbrm} (cf. Theorem \ref{thhssqs}) 
provides yet another insight and is linked to the study
of the Cauchy matrix $C_{1,\omega_{2n}}$ in 
\cite{CGS07},  \cite{XXG12},  \cite{XXCB14}. 
Various definitions, equivalent or closely related to those above,
have been introduced by a number of authors 
(cf. \cite{VVM}, \cite{B10}, \cite{EGH13}, and the references therein). 
In particular the related study of 
 $H$-matrices and $H^2$-matrices in \cite{H99},
\cite{T00}, \cite{BH02}, \cite{GH03}, \cite{B09}, \cite{B10},  
and references therein was the basis for the
software libraries  HLib, www.hlib.org, and H2Lib,
http://www.h2lib.org/, 
https://github.com/H2Lib/H2Lib, developed at the
Max Planck Institute for Mathematics in the Sciences.
 

\subsection{Operations with 
quasi\-separable matrices: definitions and demonstration}\label{sqsdfdm}


Next we cover some basic operations with 
matrices represented with 
$(l,u)$-quasi\-separable generators.




\begin{definition}\label{defalphabeta}
Given 
 diagonal blocks $\Sigma_q$, 
$q=0,\dots,k-1$, 
of an  $(l,u)$-quasi\-separable  matrix $M$
 and  $(l,u)$-quasi\-separable generators for all its 
sub- and super-diagonal blocks, 
let $\alpha (M)$ and $\beta(M)$ denote the
arithmetic cost of 
 computing 
the vectors
 $M{\bf u}$  
and  
$M^{-1}{\bf u}$, respectively,
maximized over all normalized vectors ${\bf u}$, $|{\bf u}|=1$,
and minimized over all algorithms.
Write  $\beta(M)=\infty$
if the matrix  $M$ is singular.
 $\alpha_{\xi}(M)$  and $\beta_{\xi}(M)$
replace  the bounds 
 $\alpha(M)$  and $\beta(M)$, respectively,
provided that instead of the evaluation of the vectors $M{\bf u}$
and
$M^{-1}{\bf u}$, respectively, 
we approximate them within the error bounds
 $\xi||M{\bf u}||$ and
 $\xi||M^{-1}{\bf u}||$, respectively. 
\end{definition}

The straightforward algorithm supports the following bound.


\begin{theorem}\label{thmbv}
$\alpha (M)\le 2(m+n)\rho-\rho-m$ where a generating pair 
of length $\rho$ defines an $m\times n$ matrix $M$.
\end{theorem}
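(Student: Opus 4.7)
The plan is straightforward: since a generating pair $(F,G^T)$ of length $\rho$ gives $M=FG^T$ with $F$ of size $m\times\rho$ and $G$ of size $n\times\rho$, I would compute $M\mathbf{u}$ associatively as $F(G^T\mathbf{u})$ rather than forming $M$ explicitly. This reduces the problem to two standard rectangular matrix-vector products, and the bound should fall out by simply counting the arithmetic operations in each.

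First I would compute the intermediate vector $\mathbf{w}=G^T\mathbf{u}\in\mathbb{C}^{\rho}$. Each of its $\rho$ coordinates is an inner product of a row of $G^T$ (length $n$) with $\mathbf{u}$, costing $n$ multiplications and $n-1$ additions, for a total of $\rho(2n-1)$ operations. Next I would compute $F\mathbf{w}\in\mathbb{C}^m$: each of its $m$ coordinates is an inner product of length $\rho$, costing $\rho$ multiplications and $\rho-1$ additions, for a total of $m(2\rho-1)$ operations.

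Summing these two contributions gives
\[
\rho(2n-1)+m(2\rho-1)\;=\;2n\rho-\rho+2m\rho-m\;=\;2(m+n)\rho-\rho-m,
\]
which is exactly the claimed upper bound on $\alpha(M)$, so the theorem follows. There is no real obstacle here; the only thing to be careful about is the off-by-one bookkeeping between multiplications and additions in each inner product, and the order of association $F(G^T\mathbf{u})$ rather than $(FG^T)\mathbf{u}$, which is what makes the count linear in $m+n$ instead of quadratic.
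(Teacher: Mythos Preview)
Your argument is correct and is exactly the ``straightforward algorithm'' the paper invokes (the paper does not spell out a proof beyond that phrase). The operation count you give matches the stated bound precisely.
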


The following estimates
for computations with  quasiseparable matrices   
 extend the well-known estimates in the case of banded matrices.

\begin{theorem}\label{thhssd}  \cite{DV98}, \cite{H99}, \cite{EG02}.
Suppose that an  $(l,u)$-quasi\-separable  matrix $M$ 
of size $m\times n$ is defined by 
its $m_q\times n_q$  
 diagonal blocks $\Sigma_q$, 
$q=0,\dots,k-1$, 
such that $\sum_{q=0}^{k-1}m_q=m$, $\sum_{q=0}^{k-1}n_q=n$, and 
$s=\sum_{q=0}^{k-1}m_qn_q=O( (l+u) (m+n))$
and by the generators of length at most $l$ and at most $u$ for its 
sub- and superdiagonal blocks, respectively.  

\noindent (i) Then 
$\alpha(M)\le 
2\sum_{q=0}^{k-1}((m_q+n_q)(l+u)+s)+2l^2k+2u^2k=O((l+u)(m+n))$ and

\noindent (ii) $\beta(M)=O(\sum_{q=0}^{k-1}((l+u)^2(l+u+n_q)n_q+n_q^3))$
 if $m_q=n_q$ for all $q$ and if
 the  matrix $M$ is nonsingular.
\end{theorem}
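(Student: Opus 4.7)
For part (i), I would exploit the telescoping form of the generators to reduce matrix-vector multiplication to two linear recurrences plus the diagonal block multiplications. Partition ${\bf u}$ and ${\bf y}=M{\bf u}$ conformally with the block diagonal as ${\bf u}=({\bf u}_0^T,\dots,{\bf u}_{k-1}^T)^T$ and ${\bf y}=({\bf y}_0^T,\dots,{\bf y}_{k-1}^T)^T$, and introduce the auxiliary vectors $w_i:=\sum_{h<i}A_{i-1}A_{i-2}\cdots A_{h+1}Q_h{\bf u}_h$ of length at most $l$. They satisfy the forward recurrence $w_0={\bf 0}$, $w_{i+1}=A_iw_i+Q_i{\bf u}_i$, and the strictly lower-triangular contribution to ${\bf y}_i$ is exactly $P_iw_i$. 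A symmetric backward recurrence built from $B_g$, $T_i$, $S_h$ handles the strictly upper-triangular contribution. Summing the per-step costs---$2n_il$ for $Q_i{\bf u}_i$, at most $2l^2$ for $A_iw_i$, at most $2m_il$ for $P_iw_i$, the analogous $u$-terms for the upper sweep, and $2m_qn_q$ for each $\Sigma_q{\bf u}_q$---delivers the displayed bound, which collapses to $O((l+u)(m+n))$ under the hypothesis $s=O((l+u)(m+n))$.

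For part (ii), I would build a block LDU factorization in which both triangular factors remain quasi\-separable with generator orders bounded by $O(l+u)$, and then invoke part (i) for the forward and backward solves. At step $q$ the pivot block is the current $n_q\times n_q$ Schur complement taking the place of $\Sigma_q$; factoring it contributes the $O(n_q^3)$ term. The remaining work is to update the generators of the trailing submatrix: a direct computation shows that the new trailing principal submatrix differs from the old one by a structured rank-$O(l+u)$ term whose effect on each off-diagonal factor can be absorbed into fresh generators $P_i'$, $Q_h'$, $A_g'$, $S_h'$, $T_i'$, $B_g'$ of orders still at most $O(l+u)$. Expressing these updates as explicit products of matrices of sizes $n_q\times(l+u)$, $(l+u)\times(l+u)$, and $(l+u)\times n_q$ yields the $(l+u)^2(l+u+n_q)n_q$ term; summing over $q$ produces the claimed total.

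The delicate step I expect is the structural preservation of quasi\-separability under Schur complementation in part (ii): one must verify that the generator orders do not creep upward from step to step, so that the per-step update cost does not depend on the step index $q$. The cleanest route is to write the Schur complement update in closed form in terms of the original generator six-tuple and a constant number of auxiliary vectors of length $O(l+u)$, and to read off both the new generators and their arithmetic cost directly from this formula. This is the route developed in \cite{DV98}, \cite{H99}, and \cite{EG02}, to which I would refer for the detailed bookkeeping while using my part (i) analysis essentially verbatim to cost the subsequent triangular solves.
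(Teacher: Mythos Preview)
The paper does not supply its own proof of this theorem: it is stated with attribution to \cite{DV98}, \cite{H99}, \cite{EG02} and treated as a known result, with only an illustrative demonstration in Example~\ref{ex2} and a related argument later in the proof of Theorem~\ref{thext}. So there is no in-paper proof to compare against.

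That said, your plan is exactly the standard argument from the cited references and is consistent with what the paper sketches elsewhere. For part~(i), the two-sweep recurrence $w_{i+1}=A_iw_i+Q_i{\bf u}_i$ (and its mirror with $B_g,S_h,T_i$) is precisely the mechanism behind Example~\ref{ex2} and the recursive-merging cost analysis in the proof of Theorem~\ref{thext}; your operation count matches the displayed bound. For part~(ii), the block LDU/Schur-complement route with the invariant that the trailing submatrix stays $(l,u)$-quasiseparable is the approach of \cite{EG02}, and you have correctly flagged the one nontrivial point---that generator orders do not grow under the Schur update---and deferred the bookkeeping to the cited sources, just as the paper does. Nothing is missing or wrong; your proposal would serve as a self-contained proof where the paper offers only a citation.
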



\begin{example}\label{ex2} (Cf. Figures 2 and 3.)
Let us multiply by a vector $\bf v$ the matrix $M$ of equation (\ref{eqqus}).  \\
(i) At first view it as $2\times 2$ block matrix with diagonal blocks 
$\bar \Sigma_1=\begin{pmatrix}
\Sigma_0  &  S_0T_1     \\
P_1Q_0  &  \Sigma_1   
\end{pmatrix}$
and $\bar \Sigma_2=\begin{pmatrix}
\Sigma_2   &    S_2T_3   \\
  P_3 Q_2   &  \Sigma_3
\end{pmatrix}$; multiply  
the 
blocks 
$\begin{pmatrix}
 S_0B_1T_2~  &   S_0B_1B_2 T_3  \\
   S_1T_2  &     S_1B_2T_3   
\end{pmatrix}$
and 
$\begin{pmatrix}
P_2A_1Q_0  &   P_2Q_1      \\
P_3A_2A_1Q_0  & ~P_3A_2Q_1 
\end{pmatrix}$
by two subvectors of the vector ${\bf v}$. \\
(ii) Then multiply the blocks $S_0T_1$,
$P_1Q_0$, $S_2T_3$, and $P_3 Q_2$  of the matrices $\bar \Sigma_1$
and $\bar \Sigma_2$ of smaller sizes  by four subvectors of the vector ${\bf v}$.

Perform the computations at both stages fast if the given generators of the blocks 
have small length. \\
(iii) Then multiply the four diagonal blocks
$\Sigma_1$, $\Sigma_2$, $\Sigma_3$, and $\Sigma_4$
by four subvectors of the vectors ${\bf v}$.
Perform these computations fast because the four blocks have a small overall number of entries. \\
(iv) Finally obtain the vector $M{\bf v}$ by properly summing the products. 
\end{example}
  

\subsection{Fast multiplication with 
recursive merging of diagonal blocks: outline}\label{snbrmout}


In Example \ref{ex2}  we multiply the matrix $M$ by a vector 
by using generators for only 6 out of its 22  sub- and super-diagonal blocks.
Next we extend the above demonstration to 
multiplication of a general quasi\-separable matrix $M$
by a vector 
by using a small fraction of all generators.

\begin{definition}\label{defqs}  
Suppose that  $M=(M_0~ |~\dots~|~ M_{k-1})$ is a
 $1\times k$ block matrix with $k$ {\em  block 
columns} $M_q$,
each partitioned into a
{\em diagonal block} $\Sigma_q$
and a {\em 
 neutered block column} $N_q$, $q=0,\dots,k-1$
 (cf. our Figures 1--3 and \cite[Section 1]{MRT05}).
 Such a matrix
is  {\em  $\rho$-neutered}
if its every  neutered block column $N$ is represented as 
$N=FH$ or  $N=FSH$ where $F$ of size $h\times r$, $S$  of size $r\times r$,
and $H$ of size $r\times k$ are its {\em generator
matrices} and $r\le \rho$. Call such a 
pair or triple a {\em length $r$ generator of the 
neutered block} $N$ and call $r$ its {\em length}. 
A $\xi$-approximation of such a matrix is called 
{\em  $(\xi,\rho)$-neutered}.
\end{definition}

In {\bf Figure 1} 
the diagonal blocks are black
and  
the neutered block columns are 
gray or white.

 \medskip

{\bf FIGURE 1}

\begin{figure}[ht!]\label{figure2}
\centering
\includegraphics[width=90mm]{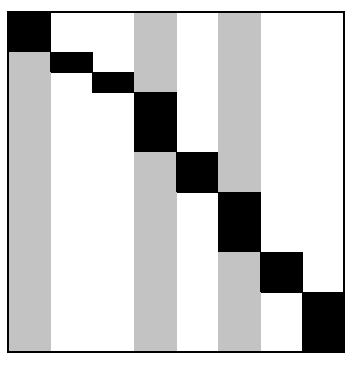}
\caption{ FIGURE 2}
\end{figure}

\clearpage


In {\bf Figure 2} the diagonal blocks from Figure 1
(marked by 
black color) are merged pairwise
 into their diagonal unions, each made up of four
blocks. Two of them (from Figure 1) are marked by
black color, and the two other by 
gray
color. 
The new neutered block columns are either white or
 gray, but their gray color is lighter.
The new (larger) diagonal blocks of Figure 2 are merged  
 pairwise into the diagonal blocks of Figure 3, each 
made up of two black and two gray blocks, and its two neutered block
columns are white.


 \medskip

 {\bf FIGURE  2}

\medskip

\begin{figure}[ht!]\label{figure3}
\centering
\includegraphics[width=90mm]{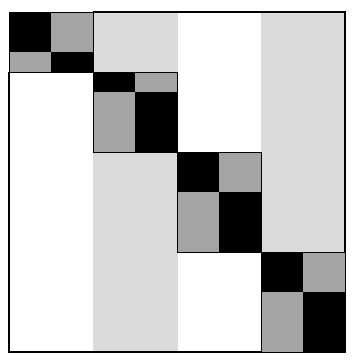}
\end{figure}


\clearpage

 {\bf FIGURE  3}

\begin{figure}[ht!]\label{figure4}
\centering
\includegraphics[width=90mm]{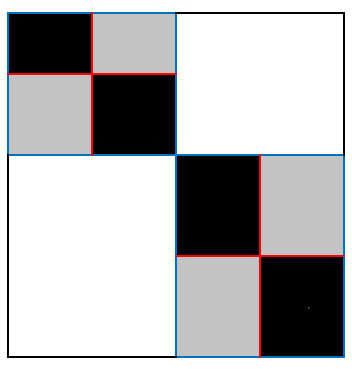}
\end{figure}

 \clearpage




\begin{theorem}\label{thntr01} 
Suppose that an $m\times n$ matrix $M$ is a 
 $\rho$-neutered $k\times k$ 
 block matrix and that we are given $k$ generators
of length at most  
$\rho$ for all its $k$
 neutered block columns as well as all the 
$\chi(\widehat \Sigma)$ entries in 
 the $k$ diagonal blocks $\Sigma_0,\dots,\Sigma_{k-1}$.
Then
$$\alpha(M)\le 2\chi(\widehat \Sigma)+(2m+2n-1)k\rho=O(\chi(\widehat \Sigma)+(m+n)k\rho).$$
\end{theorem}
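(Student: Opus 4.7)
The plan is to partition the computation of $M\mathbf v$ column-blockwise, handle each diagonal block by direct multiplication using its explicitly given entries, and handle each neutered block column by applying its length-$\rho$ generator factorization, then sum the partial results.

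First I would split $\mathbf v = (\mathbf v_0^T,\dots,\mathbf v_{k-1}^T)^T$ according to the block column partition, so that $M\mathbf v = \sum_{q=0}^{k-1} M_q\mathbf v_q$. Each $M_q$ is the stack of the diagonal block $\Sigma_q$ (of size $m_q\times n_q$) and the neutered block column $N_q$ (of size $(m-m_q)\times n_q$), with $\sum_q m_q=m$ and $\sum_q n_q=n$. Computing $\Sigma_q\mathbf v_q$ by the straightforward row-by-row scheme costs at most $2m_q n_q$ arithmetic operations (actually $m_q(2n_q-1)$), so summing over $q$ gives at most $2\chi(\widehat\Sigma)$ operations for all diagonal block contributions, matching the first term of the claimed bound.

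Next I would invoke Theorem \ref{thmbv} for each neutered block column. Applied to $N_q$, viewed as an $(m-m_q)\times n_q$ matrix of rank at most $\rho$ given by a generating pair (or triple, in which case one first folds $S$ into $F$ or into $H{\mathbf v}_q$ at no extra cost beyond $2\rho^2$ per block, which is still dominated by the bound since $\rho\le\min\{m-m_q,n_q\}$ in the non-trivial regime), the product $N_q\mathbf v_q$ costs at most $2((m-m_q)+n_q)\rho-\rho-(m-m_q)$ operations. Summing this estimate over $q=0,\dots,k-1$ and using $\sum_q(m-m_q)=(k-1)m$ and $\sum_q n_q=n$, I obtain a total of at most
\[
2\bigl((k-1)m+n\bigr)\rho - k\rho - (k-1)m
\]
operations for all neutered block column products combined.

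Finally I would account for the $k-1$ cross-block additions per output coordinate needed to sum the $k$ partial contributions that land in the same row: this adds at most $m(k-1)$ operations. Combining the three parts and using the crude but clean majorization $2(k-1)m+2n-k\le (2m+2n-1)k$ together with $-(k-1)m+m(k-1)=0$ yields the bound $2\chi(\widehat\Sigma)+(2m+2n-1)k\rho$, as claimed. The only delicate point is bookkeeping of the $\pm$ lower-order terms and the triple-vs-pair generator case; once Theorem \ref{thmbv} is in hand, the inequality follows by routine arithmetic rather than by any structural insight, so no serious obstacle arises.
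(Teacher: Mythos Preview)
Your proof is correct and follows essentially the same approach as the paper: split $M$ into its block diagonal part (handled directly at cost $2\chi(\widehat\Sigma)$) and its off-diagonal part (handled via the length-$\rho$ generators and Theorem~\ref{thmbv}). The only cosmetic difference is that the paper concatenates the $k$ generating pairs into a \emph{single} generating pair of length $k\rho$ for the entire off-diagonal matrix $M'$ and applies Theorem~\ref{thmbv} once to the $m\times n$ matrix $M'$, whereas you apply Theorem~\ref{thmbv} separately to each $(m-m_q)\times n_q$ neutered block column and then sum; your route in fact produces a slightly sharper intermediate bound before the final majorization.
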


\begin{proof}
Multiply the diagonal blocks by vectors in the straightforward way
and multiply the  neutered block columns by vectors 
by using the representation with generators.

Formally write $M=M'+\diag(\Sigma_q)_{q=0}^{k-1}$.
Notice that $\alpha(M)\le 2\chi(\widehat \Sigma)+\alpha(M')+m$.
The  neutered block columns of the matrix $M$ 
share their entries with the matrix $M'$,
whose other entries are zeros.
So the $k$ pairs 
$(F_0,G_0),\dots,(F_{k-1},G_{k-1})$
together form a single generating pair
of a length at most $k\rho$
for the matrix $M'$.
Therefore 
$\alpha(M')\le (2m+2n-1)k\rho-m$
by virtue of Theorem \ref{thmbv}.
\end{proof} 


The upper bound on $\alpha(M)$ of Theorem \ref{thntr01} is 
sufficiently small unless the integers $k$
or $\chi(\widehat \Sigma)$ are large. Unfortunately
we cannot bound both of these integers
at once, but we can circumvent the problem by applying 
the algorithm of Theorem \ref{thntr01} recursively.
 We begin with a partition
of the matrix $M$ defined by a few  diagonal blocks 
that are   $\rho$-neutered matrices  themselves. Then we 
multiply  neutered block columns fast (by using their generators),
partition 
the  diagonal blocks into smaller diagonal blocks 
and  neutered block columns, and apply the same techniques recursively
until we decrease the overall  number of entries of the remaining diagonal
blocks below  a fixed tolerance bound of order $m+n$ or $(m+n)\rho$.

We can begin with $k=2$ and $\chi(\widehat \Sigma)\approx 0.5 n^2$
and then double the integer $k$ and roughly halve the integer $\chi(\widehat \Sigma)$ 
in every recursive step.
Then overall we deal with only $O(m+n)$  neutered block columns
and their generators and therefore multiply the matrix $M$ by a vector 
by using $O((m+n)\rho)$
 arithmetic operations in all these
 recursive steps, thus matching the  cost bounds in part (i) 
of Theorem \ref{thhssd}.


\subsection{HSS and balanced HSS matrices and
the cost of basic
operations with them}\label{snbrm}


Let us supply formal definitions and formal derivation of the latter estimates
by applying the recursive process in the opposite direction,
where at first  the integer $k$ is large and then is  recursively
doubled, while the diagonal blocks are small at first
and then are merged recursively pairwise.


\begin{definition}\label{defunion}
Fix two positive integers $l$ and $q$ such that $l+q\le k$ and then
{\em merge} the $l$  block columns  \\
$M_{q},M_{q+1},\dots,M_{q+l-1}$, the $l$
diagonal blocks $\Sigma_{q},\Sigma_{q+1},\dots,\Sigma_{q+l-1}$,
and the $l$  neutered block columns
$N_{q},N_{q+1}$,
$\dots,N_{q+l-1}$
 into their {\em union}
$M_{q,l}=M(.,\cup_{j=0}^{l-1}\mathcal C(\Sigma_{q+j}))$,
 their {\em diagonal union}
$\Sigma_{q,l}$,
 and
their {\em neutered union}
$N_{q,l}$, respectively,
 such that
$\mathcal R(\Sigma_{q,l})=
\cup_{j=0}^{l-1} \mathcal R(\Sigma_{q+j})$ and
every block column $M_{q,l}$
is partitioned into the diagonal union
$\Sigma_{q,l}$ and
the neutered union
$N_{q,l}$.
\end{definition}

Define  {\em recursive merging}
of all diagonal blocks
$\Sigma_0,\dots,\Sigma_{k-1}$
by a binary tree whose leaves
are  associated with these blocks
and whose every internal vertex 
is the union of its two children
(see Figure 4).
For every vertex $v$
define the sets $L(v)$ and $R(v)$
of its left and right descendants,
respectively.
If $0\le |L(v)|-|R(v)|\le 1$
for all vertices $v$, then the
binary tree is {\em balanced} and   
 identifies {\em balanced merging}
of its leaves, in our case the diagonal blocks.
We can uniquely
define a balanced tree with $n$ leaves
 by removing  the $2^{l(n)}-n$ rightmost 
 leaves 
of the complete binary tree
that has $2^{l(n)}$ leaves for $l(n)=\lceil\log_2(n)\rceil$.
All leaves of the resulting {\em heap structure}
with $n$ leaves
lie in its two lowest levels.

 \medskip

{\bf FIGURE 4:} Balanced merging of 
 diagonal blocks.

\begin{center}
\setlength{\unitlength}{0.5in}
\begin{picture}(8,4)
\put(4,3.5){\line(-2,-1){2}}
\put(4,3.5){\line(2,-1){2}}
\put(2,2.5){\line(-1,-1){1}}
\put(2,2.5){\line(1,-1){1}}
\put(6,2.5){\line(-1,-1){1}}
\put(6,2.5){\line(1,-1){1}}
\put(1,1.5){\line(-1,-2){0.5}}
\put(1,1.5){\line(1,-2){0.5}}
\put(3,1.5){\line(-1,-2){0.5}}
\put(3,1.5){\line(1,-2){0.5}}
\put(5,1.5){\line(-1,-2){0.5}}
\put(5,1.5){\line(1,-2){0.5}}
\put(7,1.5){\line(-1,-2){0.5}}
\put(7,1.5){\line(1,-2){0.5}}
\put(4,3.5){\makebox(0,0.5){$\Sigma_{0,1,2,3,4,5,6,7}$}}
\put(1.9,2.5){\makebox(0,0.5){$\Sigma_{0,1,2,3}$}}
\put(6.2,2.5){\makebox(0,0.5){$\Sigma_{4,5,6,7}$}}
\put(0.9,1.5){\makebox(0,0.5){$\Sigma_{0,1}$}}
\put(3.2,1.5){\makebox(0,0.5){$\Sigma_{2,3}$}}
\put(4.9,1.5){\makebox(0,0.5){$\Sigma_{4,5}$}}
\put(7.2,1.5){\makebox(0,0.5){$\Sigma_{6,7}$}}
\put(0,0){\makebox(1,0.5){$\Sigma_0$}}
\put(1,0){\makebox(1,0.5){$\Sigma_1$}}
\put(2,0){\makebox(1,0.5){$\Sigma_2$}}
\put(3,0){\makebox(1,0.5){$\Sigma_3$}}
\put(4,0){\makebox(1,0.5){$\Sigma_4$}}
\put(5,0){\makebox(1,0.5){$\Sigma_5$}}
\put(6,0){\makebox(1,0.5){$\Sigma_6$}}
\put(7,0){\makebox(1,0.5){$\Sigma_7$}}
\end{picture}
\end{center}


\begin{definition}\label{defhss}
(i) A block  matrix 
is a {\em balanced $\rho$-HSS}
 matrix if it is 
 $\rho$-neutered throughout 
 the process of  balanced merging
of its diagonal blocks,
that is, if all neutered unions 
of its  neutered block columns involved 
into this process have ranks at most $\rho$.
 This is a $\rho$-{\em HSS} matrix if 
it is 
 $\rho$-neutered throughout 
 any process of recursive merging
of its diagonal blocks.  

(ii) By replacing ranks with $\xi$-ranks we
 define 
{\em balanced}  $(\xi,\rho)$-{\em HSS matrices}
and $(\xi,\rho)$-{\em HSS matrices}.
\end{definition}

 \begin{fact}\label{fahss}
(i) Let  
a matrix be  $\rho_j$-neutered at the $j$-th step of
recursive balanced merging for every $j$. Then this is 
a balanced  $\rho$-HSS matrix
for 
$\rho=\max_j  \rho_j$. 

(ii) Likewise, let  
a matrix be  $(\xi_j,\rho_j)$-neutered at the $j$-th step of
recursive balanced merging for every $j$. Then 
this is a balanced  $(\xi,\rho)$-HSS matrix
for $\xi=\max_j\xi_j$ and $\rho=\max_j  \rho_j$.
\end{fact}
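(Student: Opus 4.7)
The plan is to unpack Definition \ref{defhss} and observe that both parts follow immediately from a straightforward monotonicity of the ``$\rho$-neutered'' and ``$(\xi,\rho)$-neutered'' properties in the parameters $\rho$ and $\xi$. The key elementary observation is: if $\rho' \le \rho$, then every $\rho'$-neutered matrix is also $\rho$-neutered, since the very same generator $(F,H)$ or $(F,S,H)$ of length $r \le \rho'$ in Definition \ref{defqs} trivially satisfies $r \le \rho$ as well. An analogous remark applies to the tolerance: a $\xi'$-approximation is automatically a $\xi$-approximation whenever $\xi \ge \xi'$, since $\xi'\|N\| \le \xi\|N\|$.

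For part (i), set $\rho := \max_j \rho_j$. By hypothesis, at every step $j$ of recursive balanced merging the matrix is $\rho_j$-neutered, hence $\rho$-neutered by the monotonicity observation above. Since this holds at every step of the balanced merging process, Definition \ref{defhss}(i) directly yields that the matrix is a balanced $\rho$-HSS matrix. For part (ii), I combine the same $\rho$-monotonicity with the analogous monotonicity in the tolerance: a $\xi_j$-approximation by a $\rho_j$-neutered matrix is automatically a $\xi$-approximation by a $\rho$-neutered matrix whenever $\xi \ge \xi_j$ and $\rho \ge \rho_j$. Setting $\xi := \max_j \xi_j$ and $\rho := \max_j \rho_j$ and invoking Definition \ref{defhss}(ii) at every step of the balanced merging then concludes the argument.

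There is essentially no obstacle: the statement is a purely formal consequence of the definitions, reducing to the fact that taking the maximum of finitely many upper bounds produces a uniform upper bound that is preserved by the (monotone) defining inequalities for the neutered and approximate-neutered properties. The only point worth flagging is that the balanced merging tree has only finitely many levels (indexed by $j$), so the maxima above are indeed attained.
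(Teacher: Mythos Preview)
Your proposal is correct and matches the paper's treatment: the paper states this as a \emph{Fact} without proof, implicitly regarding it as an immediate consequence of Definitions~\ref{defqs} and~\ref{defhss}, which is exactly the monotonicity observation you spell out. There is nothing to add.
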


\begin{theorem}\label{thhssqs}
(i) Every $(l,u)$-quasi\-separable matrix $M$
is an $(l+u)$-HSS matrix.

(ii) Every $\rho$-HSS matrix is 
 $(\rho,\rho)$-quasi\-separable.
\end{theorem}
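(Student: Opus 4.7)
My plan is to prove the two parts separately, since they go in opposite directions: part (i) passes from the local rank constraints on the individual sub- and super-diagonal blocks of a quasi-separable matrix to a global rank constraint on neutered unions at every merging stage, while part (ii) simply specializes the HSS constraint to the finest partition.

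For part (i), I fix an arbitrary stage of an arbitrary recursive merging of the diagonal blocks of $M$, pick a diagonal union $\Sigma$ at that stage, and let $N$ be its neutered block column. Because the row-index set of $N$ is the complement of $\mathcal R(\Sigma)$, I split the rows of $N$ into an ``upper'' part $N_\uparrow$ (rows coming from original blocks strictly above $\Sigma$) and a ``lower'' part $N_\downarrow$ (rows coming from original blocks strictly below $\Sigma$). I then invoke Theorem~\ref{thhss1} to fix $(l,u)$-quasi-separable generators for $M$ as in Definition~\ref{defhssgen}: every entry of $N_\downarrow$ lies in the strictly lower block-triangular region and so equals $P_i A_{i-1}\cdots A_{h+1} Q_h$ for some $h<i$. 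Letting $h_0$ be the largest original column-block index contained in $\Sigma$ (so that every $i$ indexing a row of $N_\downarrow$ satisfies $i>h_0$ and every $h$ satisfies $h\le h_0$), I factor each such entry as $\bigl(P_i A_{i-1}\cdots A_{h_0+1}\bigr)\bigl(A_{h_0}\cdots A_{h+1}Q_h\bigr)$, with the convention that empty products are identity matrices. Assembling these row- and column-wise yields a shared rank factorization $N_\downarrow=FG$ whose inner dimension is $l_{h_0+1}\le l$, so $\rank(N_\downarrow)\le l$. The dual argument applied to the super-diagonal generators $\{S_h,T_i,B_g\}$ gives $\rank(N_\uparrow)\le u$, and subadditivity of rank yields $\rank(N)\le l+u$. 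Since $N$ and the merging stage were arbitrary, $M$ is an $(l+u)$-HSS matrix.

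For part (ii), I apply Definition~\ref{defhss} at the very first stage of the recursive merging, i.e., at the original partition itself, where the diagonal blocks are $\Sigma_0,\dots,\Sigma_{k-1}$ and have not yet been merged. By the $\rho$-HSS hypothesis $M$ is $\rho$-neutered at this stage, so $\rank(N_h)\le\rho$ for every $h$. But for any $h<i$ the sub-diagonal block $M(\mathcal I_i,\mathcal J_h)$ is a submatrix of $N_h$, since its columns lie in $\mathcal J_h=\mathcal C(\Sigma_h)$ and its rows in $\mathcal I_i$ are disjoint from $\mathcal R(\Sigma_h)$; likewise the super-diagonal block $M(\mathcal I_h,\mathcal J_i)$ is a submatrix of $N_h$. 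Hence both sub- and super-diagonal blocks have rank at most $\rho$, which is exactly $(\rho,\rho)$-quasi-separability.

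The only real obstacle is the shared-factorization step in part (i): I must verify that merging many sub-diagonal blocks into the single region $N_\downarrow$ does not let the rank accumulate beyond $l$. The device of splitting each generator product at the fixed index $h_0$ is exactly what forces every entry of $N_\downarrow$ to factor through a common intermediate space of dimension $l_{h_0+1}\le l$; this is the hidden global content of the quasi-separable generator structure that makes Definition~\ref{defhss0} compatible with Definition~\ref{defhss}. Everything else in both parts is bookkeeping.
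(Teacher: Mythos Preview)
Your argument is correct and follows the paper's strategy: for part~(i) you split each neutered block column into its sub- and super-diagonal halves and bound their ranks separately, and for part~(ii) you recognize each off-diagonal block as a submatrix of a neutered block column. The paper's own proof of part~(i) simply asserts $\rank(L_q)\le l$ and $\rank(U_q)\le u$ and moreover writes the bound only ``for $q=0,\dots,k-1$,'' leaving implicit both why a \emph{stack} of sub-diagonal blocks still has rank at most $l$ and why the bound persists at every merged level required by Definition~\ref{defhss}. Your device of invoking Theorem~\ref{thhss1} and then splitting every product $P_iA_{i-1}\cdots A_{h+1}Q_h$ at the fixed index $h_0$ is precisely what forces all of $N_\downarrow$ to factor through a single $l_{h_0+1}$-dimensional space, and this is the substantive step the paper omits. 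So your proof is not a different route; it is the paper's route with the key rank computation actually carried out.

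One small indexing slip in part~(ii): the super-diagonal block $M(\mathcal I_h,\mathcal J_i)$ with $h<i$ has its columns in $\mathcal J_i$, not in $\mathcal J_h$, so it is a submatrix of $N_i$ rather than of $N_h$. Since $\rank(N_i)\le\rho$ as well, the conclusion is unaffected.
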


\begin{proof}
A  neutered block column $N_q$
can be partitioned into its  block sub- and super\-diagonal parts
$L_q$ and $U_q$, respectively,
and so 
$\rank(N_q)\le \rank(L_q)+\rank(U_q)$.
This implies that $\rank(N_q)\le l+u$ for $q=0,\dots,k-1$
if the matrix $M$ is $(l,u)$-quasi\-separable, and part (i) is proven.

Next consider the union $N$ of any set of
   neutered block columns of a  matrix $M$. It
 turns into a  neutered block column at some stage 
of appropriate recursive merging. Therefore 
$\rank (N)\le \rho$ where $M$ is a
 $\rho$-HSS  matrix. Now, for every 
off-diagonal block $B$ of a matrix $M$, 
define the set of its  
neutered block columns that share some
column indices with the block $B$
and then notice that the block $B$
is a sub\-matrix 
of the neutered union of this set. Therefore 
$\rank (B)\le \rank (N)\le \rho$, and we obtain part (ii).
\end{proof}

By combining Theorems \ref{thhssd} and \ref{thhssqs}
we obtain the following results.

\begin{corollary}\label{cohssd1}
 Assume a $\rho$-HSS  matrix $M$ 
given
with $m_q\times n_q$  diagonal blocks $\Sigma_q$, $q=0,\dots,k-1$, 
and write $m=\sum_{q=0}^{k-1}m_q$, $n=\sum_{q=0}^{k-1}n_q$, and 
$s=\sum_{q=0}^{k-1}m_qn_q$. 
 Then 

(i)
$\alpha(M)< 2s+4\rho^2k+ 
4\sum_{q=0}^{k-1}(m_q+n_q)\rho=O((m+n)\rho+s)$  
and

 (ii) $\beta(M)=O(\sum_{q=0}^{k-1}((\rho+n_q)\rho^2n_q +n_q^3))$ 
if $m_q=n_q$ for all $q$ and if $\det (M)\neq 0$.
\end{corollary}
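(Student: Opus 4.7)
The plan is to derive the corollary as a direct structural substitution into Theorem \ref{thhssd}, using Theorem \ref{thhssqs}(ii) as the bridge between the HSS and quasi\-separable viewpoints. Theorem \ref{thhssd} takes as input rank bounds on the sub- and super-diagonal blocks together with quasi\-separable generators of the corresponding orders, whereas the HSS hypothesis controls the ranks of \emph{neutered block columns}; Theorem \ref{thhssqs}(ii) converts the latter into the former.

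Concretely, I would first apply Theorem \ref{thhssqs}(ii) to deduce that the given $\rho$-HSS matrix $M$ is $(\rho,\rho)$-quasi\-separable, so every block-sub\-diagonal and block-super\-diagonal sub\-matrix has rank at most $\rho$. By Theorem \ref{thhss1} this is equivalent to the existence of a six-tuple of $(\rho,\rho)$-quasi\-separable generators in the sense of Definition \ref{defhssgen} and Table \ref{tabgen}, which is exactly the input format required by Theorem \ref{thhssd}.

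For part (i), I would then substitute $l=u=\rho$ into the explicit bound of Theorem \ref{thhssd}(i):
$$\alpha(M)\le 2\sum_{q=0}^{k-1}(m_q+n_q)(l+u)+2s+2l^2k+2u^2k = 2s+4\rho^2 k+4\rho\sum_{q=0}^{k-1}(m_q+n_q),$$
which matches the explicit inequality of the corollary after using $\sum_q(m_q+n_q)=m+n$. The coarse estimate $O((m+n)\rho+s)$ then follows in the HSS regime where $k\rho=O(m+n)$, a regime forced by the condition $\chi(\widehat\Sigma)\ll mn$ of Definition \ref{defhss0} (otherwise the rank bound on neutered block columns is informationally trivial and the blocks can be collapsed without changing the order of the bound). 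Part (ii) is obtained by the same substitution $l=u=\rho$ into Theorem \ref{thhssd}(ii), under the additional hypotheses $m_q=n_q$ for all $q$ and $\det(M)\ne 0$ that were already imposed there.

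The only non-routine step is bookkeeping the $\rho^2 k$ term in the transition to the $O$-estimate: verifying that it is absorbed into $O((m+n)\rho+s)$ amounts to the observation that genuine HSS compression forces the average diagonal block size to be at least $\Theta(\rho)$, hence $k\rho\lesssim m+n$. I do not anticipate any deeper obstacle, since the substance of the result lies in the two theorems being combined, not in their combination.
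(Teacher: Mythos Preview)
Your proposal is correct and is exactly the approach the paper takes: the paper's entire proof is the single sentence ``By combining Theorems \ref{thhssd} and \ref{thhssqs} we obtain the following results,'' which is precisely your strategy of using Theorem \ref{thhssqs}(ii) to pass from $\rho$-HSS to $(\rho,\rho)$-quasi\-separable and then substituting $l=u=\rho$ into Theorem \ref{thhssd}. Your additional bookkeeping about absorbing the $4\rho^2k$ term into the $O$-estimate is more than the paper itself supplies; the paper simply asserts the asymptotic bound, implicitly relying (as you note) on the natural regime where the diagonal block sizes are not smaller than $\rho$.
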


For a balanced
$\rho$-HSS matrix $M$
we only have a little weaker representation
than in Theorem \ref{thhss1}, and
so the proof of the estimates of
Corollary \ref{cohssd1}  for $\alpha(M)$
and $\beta(M)$ does not apply, but next we extend these
bounds. Unlike Theorem \ref{thhssd}
and Corollary \ref{cohssd1}, 
 we allow
  $m_q\neq n_q$ for all $q$.

\begin{theorem}\label{thext}
Assume a 
 balanced
$\rho$-HSS 
matrix $M$ 
with
 $m_q\times n_q$  diagonal blocks $\Sigma_q$, $q=0,\dots,k-1$, having
$s=\sum_{q=0}^{k-1} m_qn_q$ entries overall and
write 
$l=\lceil\log_2(k)\rceil$,
$m=\sum_{q=0}^{k-1} m_q$,
$n=\sum_{q=0}^{k-1} n_q$,  $m_+=\max_{q=0}^{k-1} m_q$,
 $n_+=\max_{q=0}^{k-1} n_q$, and
$s\le \min\{m_+n,mn_+\}$. 

(i) Then 
\begin{equation}\label{eqalph}
\alpha(M)< 2s+(m+4(m+n)\rho)l.
\end{equation}
(ii) If $m=n$
 and if the matrix $M$ is nonsingular, then
\begin{equation}\label{eqbet}
\beta(M)=O(n_+s+(n_+^2+\rho n_++l\rho^2)n+(k\rho+n)\rho^2).
\end{equation}
(iii) The same
 bounds (\ref{eqalph}) and (\ref{eqbet})
hold  
for the transpose of a 
 balanced 
$\rho$-HSS matrix $M$
matrix having $n_q\times m_q$ diagonal blocks $\Sigma_q$
for $q=0,\dots,k-1$.    
\end{theorem}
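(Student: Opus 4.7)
The plan is to use the depth-$l$ balanced binary tree of recursive mergers (as in Definition \ref{defunion} and Figure 4) and account for the arithmetic cost level by level, where $l = \lceil \log_2 k\rceil$.

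For part (i), I would write $M\mathbf{v} = D\mathbf{v} + \sum_v (B_v \mathbf{v}_R + C_v \mathbf{v}_L)$, where $D = \diag(\Sigma_q)_{q=0}^{k-1}$ and the sum runs over all internal nodes $v$ of the tree, with $v_L, v_R$ the children of $v$, $B_v$ and $C_v$ the two rectangular off-diagonal blocks linking them within the diagonal union at $v$, and $\mathbf{v}_L, \mathbf{v}_R$ the corresponding segments of $\mathbf{v}$. Computing $D\mathbf{v}$ directly costs at most $2s-m$. By the balanced $\rho$-HSS hypothesis combined with Theorem \ref{thhssqs}(ii), each $B_v$ and $C_v$ has rank at most $\rho$, so Theorem \ref{thmbv} bounds each product by $2\rho(r+c)-\rho-r$ operations, where $r,c$ are its row and column sizes. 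A key observation is that at any fixed level the row (resp.\ column) sizes of all $B_v, C_v$ sum to exactly $m$ (resp.\ $n$), since the children row/column index sets at a given level partition $\{1,\dots,m\}$ and $\{1,\dots,n\}$; hence the total product cost per level is at most $2\rho(m+n)$, the per-level cost of merging block outputs into $M\mathbf{v}$ is at most $m$, and telescoping over the $l$ levels yields (\ref{eqalph}), with room to spare in the constant.

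For part (ii), I plan a hierarchical solve sweeping up the same merging tree via Schur complementation. The obstacle is that the diagonal blocks $\Sigma_q$ may be rectangular even though $m=n$, so no $\Sigma_q$ is individually invertible. I would absorb this by performing, during the upward sweep, local eliminations at each internal node that compress the current subproblem against the rank-$\rho$ generators of the off-diagonal blocks, producing at each step a reduced system whose size is controlled by $\rho$ rather than by the local block sizes. The per-node linear-algebra work then splits into: (a) multiplications involving the locally stored $\Sigma_q$-information against $O(n_+)$ right-hand sides, accounting for the $n_+ s$ term and part of the $(n_+^2 + \rho n_+)n$ term of (\ref{eqbet}); (b) Sherman--Morrison--Woodbury / Schur updates using the rank-$\rho$ generators, contributing $O(\rho^2)$ per node and hence $O(l\rho^2 n)$ once aggregated across the tree; and (c) a final top-level solve on a matrix of size $O(k\rho+n)$ built from the remaining compressed generators, costing $O((k\rho+n)\rho^2)$. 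Summing these three contributions yields (\ref{eqbet}).

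For part (iii), transposition preserves ranks and the balanced-merging structure: the sub- and super-diagonal blocks of $M$ swap roles in $M^T$ but retain their ranks, and neutered block columns of $M$ correspond to neutered block rows of $M^T$ at the same ranks. Consequently $M^T$ is itself balanced $\rho$-HSS, now with $m_q\times n_q$ diagonal blocks, and the algorithms of parts (i) and (ii) apply to $M^T$ verbatim and deliver the same bounds. The main obstacle throughout is part (ii): without a guarantee that the leaf blocks are square, standard HSS triangular factorization cannot be initiated at the leaves, and one must reorganize the elimination so that low-rank compressions using the off-diagonal generators precede any inversion work, while verifying that these compressions do not inflate the generator ranks along the way.
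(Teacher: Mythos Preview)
Your plan for part~(i) matches the paper's: both carry out a level-by-level arithmetic count over the balanced merging tree, bounding the off-diagonal products via Theorem~\ref{thmbv} and the diagonal products by~$2s$, and then sum over the $l$ levels. One correction, though: Theorem~\ref{thhssqs}(ii) applies only to $\rho$-HSS matrices, not to \emph{balanced} $\rho$-HSS matrices---and this distinction is precisely why Theorem~\ref{thext} is needed (see the remark immediately preceding its statement). The rank bound on $B_v$ and $C_v$ that you need follows instead directly from Definition~\ref{defhss}(i): at the merging stage where $v_L$ and $v_R$ are the current diagonal unions, $B_v$ and $C_v$ are submatrices of the neutered block columns associated with $v_R$ and $v_L$, which have rank at most~$\rho$ by the balanced $\rho$-HSS hypothesis. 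With that fix your argument goes through, and in fact your per-level bound $2\rho(m+n)$ is a bit tighter than the paper's $4\rho(m+n)$.

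For part~(ii) your plan is considerably more detailed than the paper's proof, which simply cites the merging-and-compression algorithm of \cite{CGS07} (developed there for one specific CV matrix) and asserts that both the algorithm and its analysis extend to the general balanced $\rho$-HSS setting, yielding~(\ref{eqbet}). Your compression-before-inversion sweep is in the same spirit as that algorithm, so the approaches agree in substance; you are just spelling out what the paper leaves to the reference.

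Part~(iii) is argued identically in both: transposition swaps rows and columns but preserves all the rank and structure hypotheses.
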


\begin{proof}
Let us readily prove part (i)
by just counting the arithmetic operations involved in
recursive merging.

With no loss of generality assume that 
  the $(l-1)$st (that is, final) stage of a balanced merging process 
has produced a $2\times 2$ block representation 
$$M=\begin{pmatrix}
\bar{\Sigma}_0^{(l)} & \bar S^{(l)}_{01}\bar T^{(l)}_1 &   \\
\bar S^{(l)}_{10}\bar T^{(l)}_0  &  \bar{\Sigma}_1^{(l)}   
\end{pmatrix}$$
where $\bar{\Sigma}_j^{(l)}$ is an 
$\bar m^{(l)}_{j}\times \bar n^{(l)}_{j}$ matrix, $\bar T^{(l)}_j$ is an 
$\bar n^{(l)}_{j}\times \bar {\rho}^{(l)}_{j}$ matrix,
 $\bar {\rho}^{(l)}_{j}\le \rho$, 
$j=0,1$, $\bar m^{(l)}_{1}+\bar m^{(l)}_{2}=m$, and
 $\bar n^{(l)}_{1}+\bar n^{(l)}_{2}=n$.
Clearly $\alpha(M)\le m+
\sum_{j=0}^1\alpha(\bar{\Sigma}_j^{(l)})+
\sum_{j=0}^1\alpha(\bar T^{(l)}_j)+
\alpha(\bar S^{(l)}_{01})+\alpha(\bar S^{(l)}_{10})$.

Apply Theorem \ref{thmbv} and obtain that
$\sum_{j=0}^1\alpha(\bar T^{(l)}_j)+
\alpha(\bar S^{(l)}_{01})+\alpha(\bar S^{(l)}_{10})< 4(m+n)\rho$.

The second last stage of the  balanced merging process
produces a similar $2\times 2$ block representation for each of 
the diagonal blocks $\bar{\Sigma}_j^{(l)}$, $j=0,1$.
Therefore
$\sum_{j=0}^1\alpha(\bar{\Sigma}_j^{(l)})<m+ 4(m+n)\rho + 
\sum_{j=0}^{k(1)}\alpha(\bar{\Sigma}_j^{(1-1)})$
where $\bar{\Sigma}_0^{(1-1)},\dots,\bar{\Sigma}_{k(1)-1}^{(1-1)}$
are the diagonal blocks output at the second
last merging stage (cf. Figures 3 and 4). 

By recursively   
going back through the merging process, obtain
that  
$\alpha(M)<
(m+4(m+n)\rho)l+\sum_{j=0}^{k-1}\alpha(\Sigma_j)$. Here 
$\Sigma_q=\bar{\Sigma}_q^{(0)}$ is an $m_q\times n_q$
matrix for $m_q=\bar m_q^{(0)}$, $n_q=\bar n_{q}^{(0)}$, 
$q=0,\dots,k-1$. 
Hence $\sum_{q=0}^{k-1}\alpha(\Sigma_q)<2\sum_{q=0}^{k-1}m_qn_q=2s$,
implying (\ref{eqalph}).

Part (ii) of the theorem 
 has been supported by the merging and compression algorithm of
\cite{CGS07}. The algorithm has been presented and analyzed in  \cite{CGS07} 
(cf. also \cite{XXG12} and  \cite{XXCB14}) for the
 subclass of balanced $\rho$-HSS matrices, approximating the special matrix
$(\frac{1}{\omega^-f\omega^j})_{i,j=0}^{n-1}$ for $\omega=\exp(2\pi\sqrt{-1}/n)$
and $f=\exp(\pi\sqrt{-1}/n)$, denoting primitive $n$th and $2n$th rooots of 1, respectively,
but both the
algorithm and its analysis are readily extended, and bound (\ref{eqbet}) follows.
All the proofs can be equally applied when rows of the matrix $M$ replace its 
columns and vice versa, and this implies part (iii).
\end{proof}


\begin{corollary}\label{coext}
Under the assumptions of parts (i)--(iii) of
Theorem \ref{thext} suppose 
that  $k\rho=O(n)$
and $n_++\rho=O(\log(n))$. 
Then 
$\alpha(M)=O((m+n)\log^2(n))$ and 
$\beta(M)=O(n\log^3(n))$.
\end{corollary}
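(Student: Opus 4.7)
The corollary follows by substituting the stated asymptotic assumptions into the bounds (\ref{eqalph}) and (\ref{eqbet}) of Theorem \ref{thext}, so the plan is essentially a careful bookkeeping exercise rather than a new argument. I would proceed in the following order.

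First, I would pin down the asymptotic size of the auxiliary quantities $s$, $l$, and $k$ appearing in Theorem \ref{thext}. Since $k\rho = O(n)$ and $\rho \ge 1$, we have $k = O(n)$, hence $l = \lceil \log_2 k\rceil = O(\log n)$. From the hypothesis $s \le \min\{m_+ n, m n_+\} \le m n_+$ together with $n_+ = O(\log n)$, I obtain $s = O(m \log n)$. Likewise $\rho = O(\log n)$ and $n_+ = O(\log n)$ by assumption.

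Next I would plug these into (\ref{eqalph}) to handle $\alpha(M)$. The term $2s$ is $O(m\log n) = O((m+n)\log n)$, while $(m + 4(m+n)\rho) l = O((m+n)\rho\, l) = O((m+n)\log^2 n)$. Summing gives $\alpha(M) = O((m+n)\log^2 n)$, which is the first claim.

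For $\beta(M)$, I specialise to the case $m = n$ (as required in Theorem \ref{thext}(ii)), so $s \le n n_+ = O(n\log n)$. Substituting into (\ref{eqbet}):
\begin{itemize}
\item $n_+ s = O(n_+^2\, n) = O(n \log^2 n)$;
\item $(n_+^2 + \rho n_+ + l\rho^2)\, n = O((\log^2 n + \log^3 n)\, n) = O(n \log^3 n)$;
\item $(k\rho + n)\rho^2 = O(n\cdot \log^2 n)$, using $k\rho = O(n)$.
\end{itemize}
Adding these contributions yields $\beta(M) = O(n \log^3 n)$, completing the second claim. The transposed version in part (iii) of Theorem \ref{thext} satisfies the same numerical bounds, so no separate argument is needed there.

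There is no genuine obstacle: the only care needed is to keep track of the asymmetric roles of $m$ and $n$ in the first part (where $m$ and $n$ may differ and the dominant term in $s$ is $m n_+$) versus the square case $m = n$ in the second part. Once those are separated, the estimates collapse to the claimed polylogarithmic factors.
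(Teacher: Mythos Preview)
Your proposal is correct and matches the paper's intended argument: the corollary is stated in the paper without proof precisely because it is obtained by direct substitution of the hypotheses $k\rho=O(n)$ and $n_++\rho=O(\log n)$ into bounds (\ref{eqalph}) and (\ref{eqbet}), exactly as you carry out. Your bookkeeping is accurate, including the use of $s\le mn_+$ for part~(i) and the specialization $m=n$ for part~(ii).
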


For our application to computations with CV matrices we must estimate $\alpha(M)$ and 
$\beta(M)$ for a little more general class of
matrices $M$ defined in the next section.
(Such a matrix has cyclic block tridiagonal part with a 
sufficiently small overall number of entries, say, $O((m+n)\log (m+n))$, 
such that all blocks of the matrix $M$ not overlapping this part
have small rank, say, $O(\log (m+n))$.)  
The algorithms supporting Theorem \ref{thext} 
and Corollary \ref{coext}
are quite readily extended to these matrices
in the next section.
 

\section{Extension from diagonal to tridiagonal blocks}\label{sextbd}



\begin{example}\label{exext}
The following 
matrix 
has eight square or rectangular 
diagonal blocks
$\Sigma_0,\dots,\Sigma_7$ and
becomes
block tridiagonal
if we glue 
its lower and upper boundaries,
\begin{equation}\label{eqbidinv1}
M=\begin{pmatrix}
\Sigma_0 ~ &  B_0~
   &  O~   &  O ~ & O ~ & O ~ & O ~ & A_0  \\
A_1  ~  &  \Sigma_1   
~ &  B_1 ~ & O  ~&  O ~ & O ~ & O  ~&  O \\
O ~  &  A_2  ~  &   \Sigma_2 ~ &   B_2   ~&   O   ~  & O  ~ & O ~  &  O	   \\
O  & O ~   ~&  A_3  ~  &  \Sigma_3  ~   & B_3 ~  & O ~ &  O~ &  O  \\
O  & O ~  &  O ~&  A_4  ~  &  \Sigma_4  ~   & B_4 ~  & O ~  &  O  \\
O  & O ~  &  O ~& O ~ &  A_5  ~  &  \Sigma_5  ~   & B_5 ~  & O  \\
O  & O ~  &  O ~& O ~ &  O ~ & A_6  ~  &  \Sigma_6  ~   & B_6   \\
 B_7 ~   &  O ~ & O  ~ &  O ~& O ~ &   O ~ & A_7  ~  &  \Sigma_7    
\end{pmatrix}.
\end{equation}
Define the eight {\em tridiagonal blocks},
 $$\Sigma_0^{(c)}=\begin{pmatrix}
B_7    \\
\Sigma_0      \\
A_1  
\end{pmatrix},~ 
\Sigma_1^{(c)}=\begin{pmatrix}
B_0    \\
\Sigma_1      \\
A_2  
\end{pmatrix},~
\Sigma_2^{(c)}=\begin{pmatrix}
B_1    \\
\Sigma_2      \\
A_3  
\end{pmatrix},~ 
\Sigma_3^{(c)}=\begin{pmatrix}
B_2    \\
\Sigma_3      \\
A_4  
\end{pmatrix},$$
 $$\Sigma_4^{(c)}=\begin{pmatrix}
B_3    \\
\Sigma_4      \\
A_5  
\end{pmatrix},~ 
\Sigma_5^{(c)}=\begin{pmatrix}
B_4    \\
\Sigma_5      \\
A_6  
\end{pmatrix},~
\Sigma_6^{(c)}=\begin{pmatrix}
B_5    \\
\Sigma_6      \\
A_7  
\end{pmatrix},~
{\rm and}~
\Sigma_7^{(c)}=\begin{pmatrix}
B_6    \\
\Sigma_7      \\
A_0  
\end{pmatrix}.$$
Here $\Sigma_1^{(c)}$, $\Sigma_2^{(c)}$, $\Sigma_3^{(c)}$, $\Sigma_4^{(c)}$, 
$\Sigma_5^{(c)}$, 
 and $\Sigma_6^{(c)}$ are six blocks of 
 the matrix $M$ of (\ref{eqbidinv1}),
while  $\Sigma_0^{(c)}$ 
 and $\Sigma_7^{(c)}$ consist of two pairs of its blocks. Each pair, however, turns
 into a single block
if we  glue together the lower and upper boundaries
of the matrix $M$.
With 
 the diagonal block
$\Sigma_q$ and the
{\em tridiagonal block} $\Sigma_q^{(c)}$
we still associate a block column {\em } $M_q$ such that
$\mathcal C(M_q)=\mathcal C(\Sigma_q^{(c)})$.

The {\em admissible block} $N_q^{(c)}$, 
playing the role similar to that of a  neutered block column
of Definition \ref{defqs}, 
complements the  tridiagonal
block $\Sigma_q^{(c)}$ in its 
 block column. 
The block $N_q^{(c)}$ 
 is filled with zeros in the case of the matrix
$M$ of (\ref{eqbidinv1}) for every $q$,
$q=0,\dots,7$, but not so
in the case of general $8\times 8$
block matrix embedding the matrix $M$ of (\ref{eqbidinv1}).

Here are some sample unions 
of the tridiagonal blocks of the matrix
$M$ of (\ref{eqbidinv1}),
 $\Sigma_{0,1,\dots,7}^{(c)}=M$,
$$\Sigma_{0,1,2,3}^{(c)}=\begin{pmatrix}
B_7   &   O  &  O  &  O \\
\Sigma_0 ~ &  B_0~
   &  O~   &  O    \\
A_1  ~  &  \Sigma_1   
~ &  B_1 ~ & O    \\
O ~  &  A_2  ~  &   \Sigma_2 ~ &   B_2        \\
O  & O ~   ~&  A_3  ~  &  \Sigma_3     \\
O  & O ~  &  O ~&  A_4    
\end{pmatrix},~
\Sigma_{0,1}^{(c)}=\begin{pmatrix}
B_7  & O \\
\Sigma_0  &  B_0     \\
A_1  &  \Sigma_1      \\
O  &   A_2   
\end{pmatrix},~~{\rm and}~
\Sigma_{2,3}^{(c)}=\begin{pmatrix}
B_1  & O  \\
\Sigma_2  &  B_2     \\
A_3  &  \Sigma_3      \\
O  &   A_4   
\end{pmatrix}.$$
\end{example}

In {\bf Figure 5} the admissible blocks
are 
light gray or white;  
two adjacent blocks
of each  
black diagonal block are darker
gray;
the triples of these
black and 
gray blocks  
form the tridiagonal blocks.
The neutered block columns are either white or gray.


 \medskip

{\bf FIGURE  5}

\begin{figure}[ht!]\label{figure6}
\centering
\includegraphics[width=90mm]{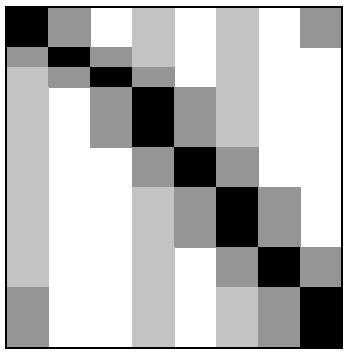}
\end{figure}

Let us generalize this  demonstration (see Figure 5).
Assume a block matrix $M$ with $k$ diagonal  blocks $\Sigma_q$,
of sizes $m_q^{(c)}\times n_q$, for $q=0,\dots,k-1$, and 
 glue  together its lower and upper block boundaries. Then
each diagonal block, including  
 the two extremal 
 blocks $\Sigma_0$ and $\Sigma_{k-1}$,  
has exactly two 
{\em adjacent blocks}
in its  block column: they are 
given by
the pair of the subdiagonal and 
superdiagonal blocks. 
Define the  
 {\em tridiagonal blocks}
$\Sigma_0^{(c)},\dots,\Sigma_{k-1}^{(c)}$
of sizes $m_q^{(c)}\times n_q$ 
by combining such triples of blocks where 
 $m_q^{(c)}=m_{q-1\mod k}+m_q+m_{q+1\mod k}$, $q=0,\dots,k-1$.
Write $m^{(c)}=\sum_{q=0}^{k-1}m_{q}^{(c)}$ and notice that 
$m^{(c)}=3m$ because
the number of rows in each of the three block diagonals 
sums to $m$. Therefore $s^{(c)}=\sum_{q=0}^{k-1}m_{q}^{(c)}n_q\le 
m^{(c)}n_+\le 3mn_+$.

 The complements 
of the tridiagonal blocks 
in their  block columns are also blocks,
 called 
 {\em admissible} (cf. \cite{B10}).
We call the matrix itself an {\em extended HSS matrix}, and
we extend accordingly our definitions of the
unions of blocks,
recursive and  
balanced merging,   
 $\rho$-neutered, 
balanced $\rho$-HSS,  
 $\rho$-HSS matrices, as well as
 $(\xi,\rho)$-neutered,
balanced $(\xi,\rho)$-HSS, and 
 $(\xi,\rho)$-HSS matrices
(cf.
Definitions \ref{defqs},
\ref{defunion}, and \ref{defhss}).
Can we extend 
 Theorem \ref{thext} and Corollary \ref{coext}
to the case of extended balanced $\rho$-HSS
matrices $M$ where we replace the integer parameters $m$
and $s$ by $m^{(c)}=3m$
and $s^{(c)}\le m^{(c)}n_+= 3mn_+$, respectively?
The extension of part (i) of Theorem \ref{thext} is immediate,
but in order to extend the algorithms supporting its part (ii), 
 we must impose some restriction on the input matrix $M$.

\begin{definition}\label{defrwc}
An extended
balanced  
$\rho$-HSS matrix 
is
{\em hierarchically regular}
if
all its diagonal blocks at the second  
factorization stage
of the associated balanced merging process
  have full rank. This matrix is 
{\em hierarchically well-con\-di\-tioned}
if these blocks are also well-con\-di\-tioned.
\end{definition}

\begin{theorem}\label{thalphbt}
Suppose that the matrix $M$ in Theorem \ref{thext}
is replaced by 
 an   extended $m\times n$
balanced $\rho$-HSS matrix $M^{(c)}$ 
and also suppose that
the integer parameters $m$ and $s$
in bounds (\ref{eqalph}) on $\alpha(M)$
and (\ref{eqbet}) on $\beta(M)$ are replaced by
$m^{(c)}=3m$ and
$s^{(c)}\le 3mn_+$, respectively.
Then bound (\ref{eqalph}) still holds, and 
 bound (\ref{eqbet})  holds
 if
 $m=n$ and if the matrix $M$
is hierarchically regular
and hierarchically well-conditioned.
\end{theorem}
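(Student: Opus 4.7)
The plan is to mimic the proof of Theorem \ref{thext} almost verbatim, with two substitutions: the tridiagonal blocks $\Sigma_q^{(c)}$ of size $m_q^{(c)}\times n_q$ play the role of the diagonal blocks $\Sigma_q$ of Theorem \ref{thext}, and the admissible blocks $N_q^{(c)}$ play the role of the neutered block columns $N_q$. Because every row of $M^{(c)}$ appears in exactly three tridiagonal blocks (its own and those of its two cyclic neighbors), the total row count triples ($m^{(c)}=3m$) and the total entry count of the tridiagonal blocks is bounded by $s^{(c)}\le 3mn_+$. Apart from this bookkeeping, the recursive balanced-merging structure carries over unchanged, because by the extension of Definition \ref{defhss} to the tridiagonal case the admissible unions at every level of merging still have rank at most $\rho$.

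For part (i), I would follow the proof of Theorem \ref{thext}(i) step by step. At the top merging level, write $M^{(c)}$ as a $2\times 2$ block matrix with two tridiagonally-merged diagonal blocks and two admissible off-diagonal blocks of length at most $\rho$; multiplication by the two top-level admissible blocks via their length-$\rho$ generators costs at most $4(m+n)\rho$ operations by Theorem \ref{thmbv}. Recursing down the balanced merging tree of depth $l=\lceil\log_2(k)\rceil$ contributes the factor $l$ exactly as before, and at the leaves direct multiplication by the tridiagonal blocks $\Sigma_q^{(c)}$ costs $2s^{(c)}$ in total. Summing yields bound (\ref{eqalph}) with $s$ replaced by $s^{(c)}$ and $m$ replaced by $m^{(c)}=3m$.

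For part (ii), I would extend the merging-and-compression algorithm of \cite{CGS07} that underlies the proof of Theorem \ref{thext}(ii). That algorithm performs block-Gaussian-type elimination along the balanced merging tree: at each level it eliminates the current level's diagonal blocks by Schur complementation and re-compresses the remaining off-diagonal couplings using the HSS low-rank representation. In the extended setting, each block to be eliminated is a tridiagonal block $\Sigma_q^{(c)}$ or one of its merged descendants; its column width still equals $n_q$ at the leaves and grows as prescribed by the merging tree, so the per-level arithmetic cost matches the analysis of \cite{CGS07}. The cyclic wrap-around can be handled either by a cyclically indexed merging tree or by cutting the cycle and treating the two affected boundary tridiagonal blocks ($\Sigma_0^{(c)}$ and $\Sigma_{k-1}^{(c)}$) separately, which adds only lower-order overhead. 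Accumulating the contributions level by level produces bound (\ref{eqbet}) with $s$ replaced by $s^{(c)}$.

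The main obstacle, and the reason for the extra hypotheses in part (ii), is ensuring that the Schur complementation and the ensuing compression are well defined. Every tridiagonal block we eliminate, as well as every merged block formed from them during the recursion, must be nonsingular, which is precisely the content of hierarchical regularity; they must additionally be well-conditioned so that the compression step does not inflate error constants, which is the content of hierarchical well-conditioning. Once both conditions are in force, the elimination and stability arguments of \cite{CGS07} transfer to the cyclic block-tridiagonal setting with no essential modification, giving bound (\ref{eqbet}) and completing the proof.
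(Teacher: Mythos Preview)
Your proposal is correct and follows essentially the same approach as the paper: revisit the proof of Theorem~\ref{thext}, replacing diagonal blocks by tridiagonal blocks (hence $m$ by $m^{(c)}=3m$ and $s$ by $s^{(c)}$), and for part~(ii) invoke hierarchical regularity and well-conditioning to ensure that the merging-and-compression elimination of \cite{CGS07} goes through. The paper's proof is in fact much terser than yours, simply directing the reader to redo the proof of Theorem~\ref{thext} with the stated parameter substitutions and to use the extra hypotheses for bound~(\ref{eqbet}).
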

\begin{proof}
Revisit the proof of the
Theorem \ref{thext}, by
replacing the integer parameters $m$ and $\bar s^{(j)}$ 
according to the assumptions of Theorem \ref{thalphbt},
and verify that  the proof still remains valid 
(use the assumption that the matrix $M$ is
 hierarchically regular and hierarchically well-conditioned
in order to extend bound (\ref{eqbet})). 
\end{proof}


\begin{corollary}\label{coext1}
Under the assumptions of 
Theorem \ref{thalphbt} suppose that  $k\rho=O(n)$
and $n_++\rho=O(\log(n))$. 
Then 
$\alpha(M)=O((m+n)\log^2(n))$ and 
$\beta(M)=O(n\log^3(n))$.
\end{corollary}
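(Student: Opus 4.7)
The plan is straightforward substitution: Corollary \ref{coext1} should follow simply by plugging the hypotheses $k\rho=O(n)$ and $n_++\rho=O(\log(n))$ into the bounds of Theorem \ref{thalphbt}, so the main work is a careful term-by-term estimate. I do not expect any genuine obstacle; the only thing to watch is that the bound (\ref{eqbet}) on $\beta$ has three summands that each need to be checked separately, and I want to be sure the dominant term is really $O(n\log^3(n))$.

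First I would extract the size of the merge depth. Since each $n_q\le n_+=O(\log(n))$ and $\sum_{q=0}^{k-1}n_q=n$, we have $k\ge n/n_+=\Omega(n/\log(n))$; combined with $k\rho=O(n)$ this gives $k=O(n)$, hence $l=\lceil\log_2(k)\rceil=O(\log(n))$. I would also record the immediate consequence $s^{(c)}\le 3mn_+=O(m\log(n))$.

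Next I would bound $\alpha(M)$ by substituting into (\ref{eqalph}) with $m$ replaced by $m^{(c)}=3m$ and $s$ by $s^{(c)}$, as allowed by Theorem \ref{thalphbt}:
\[
\alpha(M)\le 2s^{(c)}+(m^{(c)}+4(m+n)\rho)l=O(m\log(n))+O(m\log(n))+O((m+n)\log^2(n)),
\]
which collapses to $O((m+n)\log^2(n))$ as required.

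Finally I would bound $\beta(M)$ by substituting into (\ref{eqbet}) with $m=n$. The three summands become
\[
n_+s^{(c)}=O(\log(n))\cdot O(n\log(n))=O(n\log^2(n)),
\]
\[
(n_+^2+\rho n_++l\rho^2)\,n=\bigl(O(\log^2(n))+O(\log^2(n))+O(\log^3(n))\bigr)\,n=O(n\log^3(n)),
\]
\[
(k\rho+n)\rho^2=O(n)\cdot O(\log^2(n))=O(n\log^2(n)),
\]
and their sum is $O(n\log^3(n))$. This completes the proof. The only subtle point worth flagging in the write-up is the derivation $l=O(\log(n))$ from the two hypotheses, since both are needed (the hypothesis on $n_+$ alone would not bound $k$ from above).
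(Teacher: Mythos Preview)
Your proposal is correct and is precisely the intended argument: the paper states this corollary (like the parallel Corollary \ref{coext}) without proof, treating it as an immediate substitution of the hypotheses into bounds (\ref{eqalph}) and (\ref{eqbet}), which is exactly what you carry out term by term. One small remark on your closing comment: the bound $l=O(\log(n))$ actually needs neither extra hypothesis, since $k\le n$ holds trivially from $\sum_q n_q=n$ with each $n_q\ge 1$, but this does not affect the validity of your proof.
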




\bigskip

{\huge PART III: COMPUTATIONS WITH CV MATRICES 

\medskip 

\centerline {AND EXTENSIONS}}



\section{Approximation of the
CV and CV$^T$ 
matrices by HSS matrices
and algorithmic implications}\label{snrqs}




Our next goal is approximation of CV by HSS matrices,
which will imply fast approximate solution of   Problems 1--4
because in Part I we reduced them
to computations with CV matrices of (\ref{eqcvm}),
and  in Part II we described fast computations with HSS matrices.


\subsection{Small-rank approximation of certain Cauchy matrices}\label{ssra}


\begin{definition}\label{defss} (See  \cite[page 1254]{CGS07}.)
For a {\em separation bound} $\theta<1$ and a complex {\em separation center} $c$,
a pair of complex points $s$ and $t$
is $(\theta,c)$-{\em separated} 
if $|\frac{t-c}{s-c}|\le \theta$.
A pair of sets  of complex numbers  $\mathcal S$ and 
$\mathcal T$ is $(\theta,c)$-{\em separated} 
if  every pair of points  $s\in \mathcal S$
and $t\in \mathcal T$ 
is $(\theta,c)$-separated.
\end{definition}

\begin{lemma}\label{less} (See  \cite{R85} 
and \cite[equation (2.8)]{CGS07} or \cite{Pb}.)
Suppose a pair of complex points $s$ and $t$ is
 $(\theta,c)$-separated
for $0\le \theta<1$ and a complex {\em center} $c$.
 Fix a positive integer $\rho$ 
and write $q=\frac{t-c}{s-c}$ and
$|q|\le \theta$. 
Then  
$\frac{1}{s-t}=
\frac{1}{s-c}\sum_{h=0}^{\rho-1}\frac{(t-c)^h}{(s-c)^h}+\frac{q_{\rho}}{s-c}$
for
$|q_{\rho}|=\frac{|q|^{\rho}}{1-|q|}\le \frac{\theta^{\rho}}{1-\theta}$
and~ a positive integer $\rho$.
\end{lemma}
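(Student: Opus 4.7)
The plan is to reduce the identity to a geometric series expansion. First I would rewrite the denominator by splitting off the center $c$: $s - t = (s-c) - (t-c)$, so that
\[
\frac{1}{s-t} = \frac{1}{s-c}\cdot\frac{1}{1 - \frac{t-c}{s-c}} = \frac{1}{s-c}\cdot\frac{1}{1-q},
\]
where $q = \frac{t-c}{s-c}$. The $(\theta,c)$-separation hypothesis guarantees $|q| \le \theta < 1$, so the denominator $1-q$ is safely away from zero and the geometric series in $q$ converges.

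Next I would expand $\frac{1}{1-q}$ as a finite geometric sum plus an exact remainder. Using the identity $\frac{1-q^\rho}{1-q} = \sum_{h=0}^{\rho-1} q^h$, one obtains
\[
\frac{1}{1-q} = \sum_{h=0}^{\rho-1} q^h + \frac{q^\rho}{1-q}.
\]
Multiplying through by $\frac{1}{s-c}$ and substituting back $q = \frac{t-c}{s-c}$ gives precisely the claimed expression with $q_\rho := \frac{q^\rho}{1-q}$.

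Finally I would bound $|q_\rho|$. By the triangle inequality in the form $|1-q| \ge 1 - |q|$ (valid since $|q|<1$), we get
\[
|q_\rho| = \frac{|q|^\rho}{|1-q|} \le \frac{|q|^\rho}{1-|q|}.
\]
Since $|q|\le\theta$ and the function $x \mapsto \frac{x^\rho}{1-x}$ is increasing on $[0,1)$, the bound $|q_\rho| \le \frac{\theta^\rho}{1-\theta}$ follows. There is no real obstacle here — the only thing to watch is monotonicity of the bound $\frac{x^\rho}{1-x}$ in $x$, which is immediate since both $x^\rho$ and $\frac{1}{1-x}$ are increasing on $[0,1)$.
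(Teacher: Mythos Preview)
Your argument is correct and is exactly the standard geometric-series derivation that the cited references use; the paper itself does not give a proof but defers to \cite{R85}, \cite{CGS07}, and \cite{Pb}. One minor remark: the equality $|q_\rho|=\frac{|q|^\rho}{1-|q|}$ in the statement should really be an inequality (as your proof correctly derives via $|1-q|\ge 1-|q|$), so you have in fact established the intended bound.
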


\begin{corollary}\label{coss}  (Cf. \cite[Section 2.2]{CGS07},
 \cite{B10}, or \cite{Pb}.)
Suppose that two sets 
of $2n$ distinct complex numbers
$\mathcal S=\{s_0,\dots,s_{m-1}\}$ and 
$\mathcal T=\{t_0,\dots,t_{n-1}\}$ 
  are $(\theta,c)$-separated from one another
for $0<\theta<1$ and a {\em global complex center} $c$.
Define the Cauchy matrix
$C=(\frac{1}{s_i-t_j})_{i,j=0}^{m-1,n-1}$
and let  $\delta=\delta_{c,\mathcal S}=\min_{i=0}^{m-1} |s_i-c|$
denote the distance from the center $c$ to the set $\mathcal S$.
 Fix a positive integer $\rho$
and define the $m\times \rho$ matrix
$F=(1/(s_i-c)^{\nu+1})_{i,\nu=0}^{m-1,\rho-1}$
and the $n\times \rho$ matrix
$G=((t_j-c)^{\nu})_{j,\nu=0}^{n-1,\rho-1}$.
(We can compute these matrices by using
 $(m+n)\rho+m$ arithmetic operations.)
Then
\begin{equation}\label{eqappr}
 C=FG^T+E,~|E|\le \frac{\theta^{\rho}}{(1-\theta)\delta}.
\end{equation}
\end{corollary}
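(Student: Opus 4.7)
The plan is to derive the corollary directly by applying Lemma \ref{less} entrywise to the Cauchy matrix $C$ and then recognizing the principal term as the matrix product $FG^T$.

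First I would fix an arbitrary pair of indices $i,j$ and apply Lemma \ref{less} with $s=s_i$ and $t=t_j$. Since the pair is $(\theta,c)$-separated, the lemma yields
\begin{equation*}
\frac{1}{s_i-t_j}=\frac{1}{s_i-c}\sum_{h=0}^{\rho-1}\frac{(t_j-c)^h}{(s_i-c)^h}+\frac{q_\rho^{(i,j)}}{s_i-c},\qquad |q_\rho^{(i,j)}|\le \frac{\theta^\rho}{1-\theta}.
\end{equation*}
Rewriting the sum as $\sum_{h=0}^{\rho-1}\frac{(t_j-c)^h}{(s_i-c)^{h+1}}$, the $h$-th term is exactly $F_{i,h}\cdot G_{j,h}$ by the definitions of $F$ and $G$; hence the sum equals the $(i,j)$-entry of $FG^T$. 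The remainder forms the $(i,j)$-entry of the error matrix $E$.

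Next I would bound $|E|=\max_{i,j}|E_{i,j}|$. Because $|q_\rho^{(i,j)}|\le \theta^\rho/(1-\theta)$ from the lemma and $|s_i-c|\ge \delta$ by the definition of $\delta$, we obtain
\begin{equation*}
|E_{i,j}|=\frac{|q_\rho^{(i,j)}|}{|s_i-c|}\le \frac{\theta^\rho}{(1-\theta)\delta},
\end{equation*}
uniformly in $i,j$, which gives inequality~(\ref{eqappr}).

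Finally, for the parenthetical cost remark, I would note that $F$ and $G$ can be built by first computing the $m$ reciprocals $1/(s_i-c)$ (using $m$ operations) and the $n$ differences $t_j-c$, and then generating the $\rho$ columns of each matrix by repeated multiplication. Each subsequent column in $F$ costs one multiplication per row (so $(\rho-1)m$ extra operations) and similarly $(\rho-1)n$ for $G$, giving the stated $(m+n)\rho+m$ bound up to lower-order terms. The only mildly delicate point is the bookkeeping of this operation count; the mathematical content is entirely contained in Lemma~\ref{less}, so no real obstacle arises.
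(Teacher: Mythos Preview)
Your proposal is correct and follows exactly the intended route: the paper does not spell out a proof but presents the corollary as an immediate consequence of Lemma~\ref{less}, and your entrywise application of that lemma, followed by the identification of the principal sum with $(FG^T)_{i,j}$ and the uniform bound $|s_i-c|\ge\delta$ on the remainder, is precisely that derivation. The arithmetic-cost remark is incidental, and your accounting matches the paper's claimed $(m+n)\rho+m$ up to the same level of precision the paper itself uses.
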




\subsection{Block partition of a Cauchy matrix}\label{slc}


Generally neither CV matrix of equation (\ref{eqcvm}) nor its blocks of a large size
 have global separation centers. 
So, instead of the approximation of a CV matrix by a low-rank matrix,
we seek its approximation by an extended balanced $\rho$-HSS matrix 
for a bounded integer  $\rho$. 
At first we fix a reasonably large integer $k$ and then partition the complex plane into $k$
congruent sectors sharing the origin 0. The following definition induces 
a uniform {\em $k$-partition} of the knot sets
$\mathcal S$ and $\mathcal T$ and thus induces a block partition of the associated
Cauchy matrix. In the next subsection we specialize these partitions   
to the case of a CV matrix.




\begin{definition}\label{defasd} (See Figure 6.)
$\mathcal A(\phi,\phi')=\{z=\exp(\psi\sqrt {-1}):~0\le \phi\le \psi<\phi'< 2\pi\}$
is the 
{\em semi-open 
arc} of the unit circle 
$\{z:~|z| =1\}$ 
with length $\phi'-\phi$ and endpoints 
$\tau=\exp(\phi\sqrt{-1})$ and $\tau'=\exp(\phi'\sqrt{-1})$.
$\Gamma(\phi,\phi')=
\{z=r\exp(\psi\sqrt {-1}):~r\ge 0,~0\le \phi\le \psi<\phi'< 2\pi\}$
is the 
{\em semi-open sector}. $\bar {\Gamma}(\phi,\phi')$ is its  exterior.
\end{definition}

In {\bf Figure 6} we mark by black color an arc of the unit circle
$\{z:~|z=1|\}$.
The five line intervals
 $[0,\tau]$, $[0,c]$, $[0,\tau']$, $[\tau,c]$, and $[c,\tau]$
are shown by 
dotted lines.
Two broken lines represent
the two line intervals
 bounding the intersection
of the sector $\Gamma (\psi,\psi')$ and the unit disc
 $D(0,1)=\{z:~|z|\le 1\}$. The two perpendiculars
from the center $c$ onto these two bounding line intervals
are also represented by broken lines.

 \medskip

{\bf FIGURE  6}

\begin{figure}[ht!]\label{figure10}
\centering
\includegraphics[width=90mm]{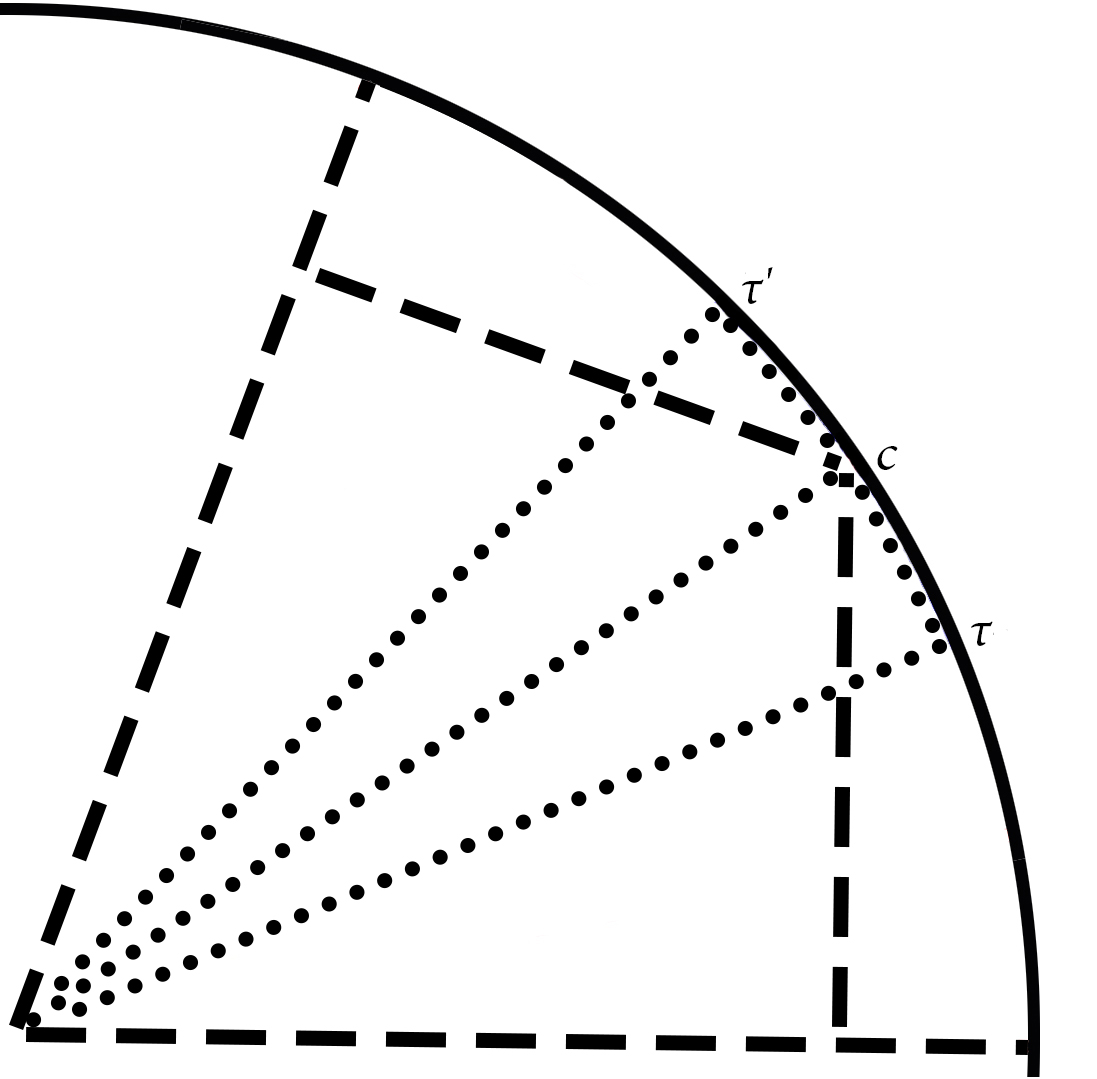}
\end{figure}





 Fix a positive  integer  $l_+$, write $k=2^{l_+}$, 
$\phi_q=2q\pi/k$, and
$\phi'_{q}=\phi_{q+1\mod k}$. Then
$|\phi'_{q}-\phi_{q}|=2\pi/k~{\rm for~all}~q$.
 
Partition
 the unit circle $\{z:~|z =1|\}$ 
 by $k$ equally spaced points $\phi_0,\dots,\phi_{k-1}$
 into $k$ semi-open arcs 
$\mathcal A_q=\mathcal A(\phi_q,\phi'_{q})$, each 
of length $2\pi/k$.
Define 
the semi-open sectors 
$\Gamma_q=\Gamma(\phi_q,\phi'_{q})$
for $q=0,\dots,k-1$, that is, $\Gamma_q=\Gamma(\phi_q,\phi_{q+1})$,
for $q=0,\dots,k-2$,  and $\Gamma_{k-1}=\Gamma(\phi_{k-1},\phi_{0})$. 

Assume the polar representation 
 $s_i=|s_i|\exp(\mu_i\sqrt{-1})$ and 
 $t_j=|t_j|\exp(\nu_j\sqrt{-1})$.
 
Notice that the knots
$t_0,\dots,t_{n-1}$ have been enumerated 
in the counter-clockwise order of the  angles  $\nu_j$, beginning with the knots in
the sector $\Gamma(\phi_0,\phi_0')$. 
Similarly 
re-enumerate the knots 
$s_0,\dots,s_{m-1}$, in the
counter-clockwise order of the  angles  $\mu_j$.  
Induce
the  block  partition
of a Cauchy matrix $C=(C_{p,q})_{p,q=0}^{k-1}$
and its  partition 
 into    block columns 
$C=(C_{0}~|~\dots|~C_{k-1})$
such that 
$$C_{p,q}=\Big(\frac{1}{s_i-
t_j}\Big)_{s_i\in \Gamma_p,t_j\in \Gamma_q}~{\rm and}~C_{q}=\Big(\frac{1}{s_i-
t_j}\Big)_{s_i\in \{0,\dots,n-1\},t_j\in \Gamma_q}~{\rm  for}~p,q=0,\dots,k-1.$$
Furthermore, for every $q$,	 define
(i) the diagonal block
$\Sigma_q=C_{q,q}$, 
(ii) the two  adjacent blocks
$C_{q-1\mod k,q}$ and $C_{q+1\mod k,q}$ above and below it,
(iii) the tridiagonal block
$\Sigma_q^{(c)}$ (made up of the block $C_{q}$ and the two  adjacent blocks), 
and 
(iv) the admissible block $N_q^{(c)}$, which
 complements the tridiagonal block
$\Sigma_q^{(c)}$ in its  block column $C_q$.  
  
If a tridiagonal block $\Sigma_q^{(c)}$ is empty, then 
the admissible block  $N_q^{(c)}$ occupies the entire  block column $C_q$,
that is, this block column has rank at most $\rho$. 
If, on the contrary, a tridiagonal block $\Sigma_q^{(c)}$
occupies the entire  block column $C_q$, then
only the tridiagonal blocks in the two neighboring  block columns
$C_{q-1\mod k}$ and $C_{q+1\mod k}$
can be nonempty, and so all the other   block columns
are occupied entirely by admissible blocks and 
hence have ranks  at most $\rho$.


\subsection{Separation of the tridiagonal and admissible blocks of a CV matrix}\label{ssep}


The following lemma can be readily verified (cf. Figure 6).

\begin{lemma}\label{learc} 
$0\le \chi\le \phi\le \eta <\phi'< \chi'\le \pi/2$
and write 
$\tau=\exp(\phi\sqrt{-1})$, $c=\exp(\eta\sqrt{-1})$, and $\tau'=\exp(\phi'\sqrt{-1})$.
Then  $|c-\tau|=2\sin (\frac{\eta-\phi}{2})$
and  the distance from the point $c$ to the sector $\bar \Gamma(\chi,\chi')$
is equal to $\sin (\psi)$, for $\psi=\min\{\eta-\chi,\chi'-\eta\}$.
\end{lemma}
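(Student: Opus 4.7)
The plan is to verify both assertions by elementary planar trigonometry, so this is essentially a routine geometric computation rather than a conceptually hard lemma.

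For the first identity, I would apply the standard chord-length formula on the unit circle. Writing
$c - \tau = e^{i\eta} - e^{i\phi} = e^{i(\eta+\phi)/2}\bigl(e^{i(\eta-\phi)/2} - e^{-i(\eta-\phi)/2}\bigr) = 2i\,e^{i(\eta+\phi)/2}\sin\bigl(\tfrac{\eta-\phi}{2}\bigr),$
and using that $\eta-\phi\ge 0$ by hypothesis, taking absolute values gives $|c-\tau|=2\sin(\tfrac{\eta-\phi}{2})$.

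For the second identity, I would first observe that the hypotheses $\chi\le\phi\le\eta<\phi'<\chi'$ place $c=e^{i\eta}$ strictly inside the sector $\Gamma(\chi,\chi')$. Hence the distance from $c$ to the exterior $\bar\Gamma(\chi,\chi')$ equals the distance from $c$ to the boundary of $\Gamma(\chi,\chi')$, which consists of the two rays from the origin at angles $\chi$ and $\chi'$. For the ray at angle $\chi$, the perpendicular from $c$ has length $|c|\sin(\eta-\chi)=\sin(\eta-\chi)$, and its foot lies on the ray itself (not on the backward extension) because $0\le\eta-\chi\le \chi'-\chi\le\pi/2$. The analogous argument for the ray at angle $\chi'$ gives perpendicular distance $\sin(\chi'-\eta)$, again with the foot on the forward ray since $0\le\chi'-\eta\le \chi'-\chi\le\pi/2$. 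The distance from $c$ to the boundary of the sector is therefore the smaller of these two, $\sin(\min\{\eta-\chi,\chi'-\eta\})=\sin(\psi)$, as claimed.

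The only subtlety I would be careful to record is why the perpendicular feet actually land on the rays themselves rather than on their backward continuations; this rests on the interval bound $\chi'-\chi\le\pi/2$ implicit in the hypothesis $0\le\chi$ and $\chi'\le\pi/2$. Beyond this, no real obstacle arises; the lemma is pure trigonometry, and Figure~6 essentially already contains the proof by inspection.
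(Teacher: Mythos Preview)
Your proof is correct and is exactly the elementary trigonometric verification the paper has in mind: the paper does not spell out a proof at all, stating only that the lemma ``can be readily verified (cf.\ Figure~6),'' and your chord-length computation together with the perpendicular-drop argument to the two bounding rays is precisely that verification. Your remark about the perpendicular feet landing on the forward rays (using $\chi'-\chi\le\pi/2$) is the one detail worth recording, and you have done so.
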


Next we specialize the block partition 
of the previous subsection to the case of a CV matrix 
$C_{{\bf s},f}$ of (\ref{eqcvm}) for a fixed complex $f$
such that $|f|=1$. In this case
 $t_j=f\omega_k^j$ for  $\omega_k=\exp(2\pi\sqrt {-1}/k)$, 
$j=0,\dots,n-1$, and
every arc $\mathcal A_q$ contains $\lceil n/k\rceil$ 
or $\lfloor n/k\rfloor $ knots $t_j$.

In Figure 7, $\psi=\phi_1+\frac{\phi_0}{2}$. 





 \medskip

{\bf FIGURE 7}
\begin{figure}[ht!]\label{figure7}
\centering
\includegraphics[width=130mm]{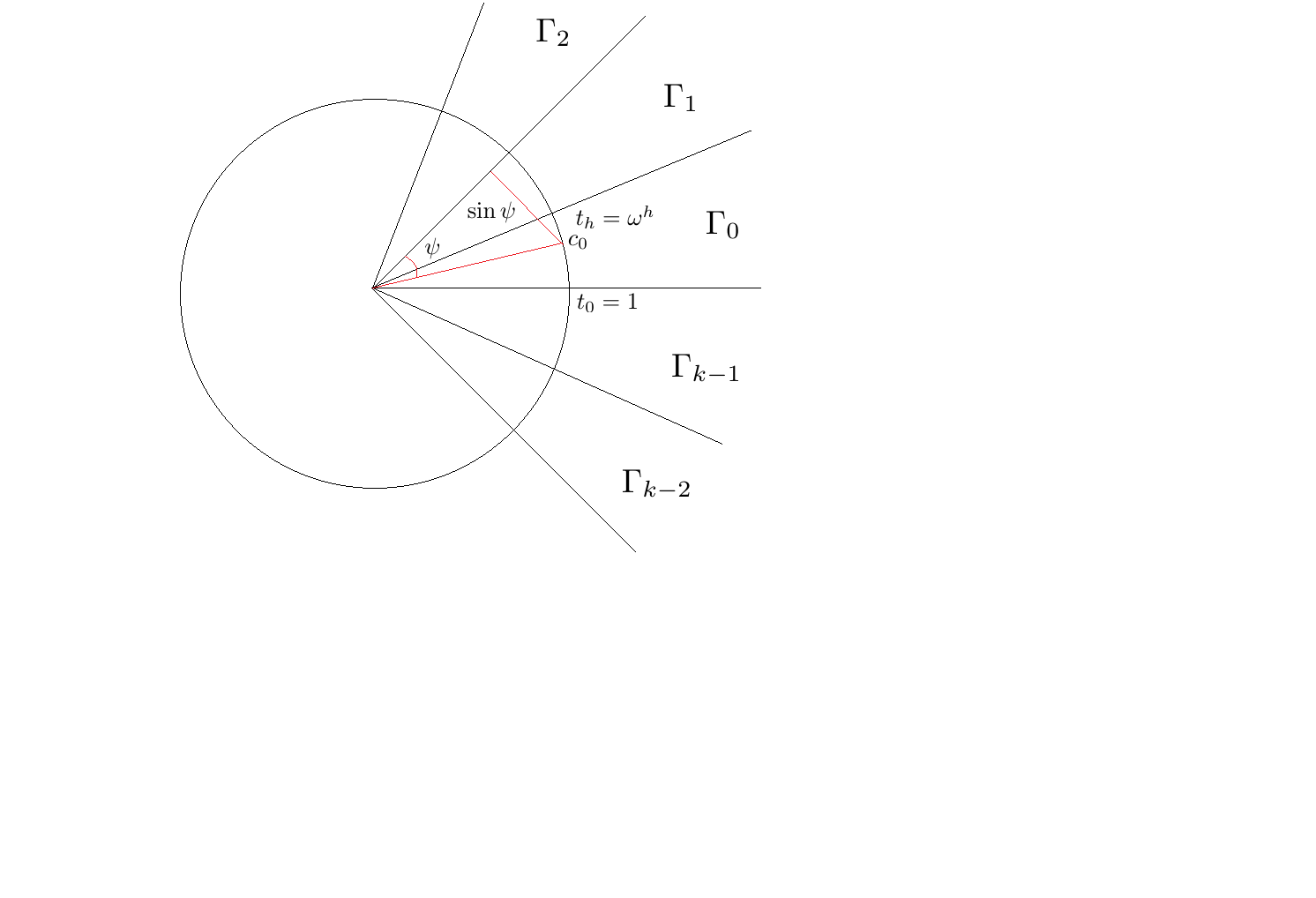}
\end{figure}

\begin{theorem}\label{thsep} 
(Cf. Figure 7.)
Assume a uniform  $k$-partition of the knot sets of a CV matrix above
for $k\ge 12$. 
Let $\Gamma'_q$ denote the union of the sector 
$\Gamma_q$ and its two  adjacent sectors on both sides, that is,  
$\Gamma'_q=\Gamma_{q-1 \mod k}\cup\Gamma_{q}\cup \Gamma_{q+1 \mod k}$.
Write  $\bar \Gamma'_q$ to denote the exterior of the sector $\Gamma'_q$
and write  $c_q$ to denote the midpoints of the arcs  
 $\mathcal A_q=\mathcal A(\phi_q,\phi_q')$
for $\phi_q'=\phi_{q+1\mod k}$ and $q=0,\dots,k-1$. 
Furthermore let $\bar\delta_q$ denote the distance from the center $c_q$
to the sector $\bar \Gamma'_q$. 
Then, for every $q$,  
(i) $\bar\delta_q\ge |\sin (\frac{3\pi}{k})|$ and 
(ii)  the arc $\mathcal A_q$
and the sector $\bar \Gamma'_q$ are $(\theta,c_q)$-separated
for  $\theta=2\sin(\frac{\pi}{2k})/\sin (\frac{3\pi}{k})$.
\end{theorem}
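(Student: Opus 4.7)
The plan is to establish part (i) first, since part (ii) follows directly from it together with a bound on the diameter of $\mathcal{A}_q$ about its center $c_q$. Both claims reduce to elementary trigonometry on the unit circle once the angular positions of the relevant boundaries are identified, so the main work is bookkeeping of angles rather than anything substantive. There is no real obstacle; if anything deserves care it is only checking that the hypothesis $k\ge 12$ places the relevant angles well within the range where the local picture of Lemma \ref{learc} applies (in particular, $3\pi/k \le \pi/4 < \pi/2$, so all sines below are positive and the absolute-value signs in the statement are cosmetic).

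For part (i), I would first note that $c_q=\exp(\eta_q\sqrt{-1})$ with $\eta_q=\phi_q+\pi/k$, since $c_q$ is the midpoint of $\mathcal{A}_q=\mathcal{A}(\phi_q,\phi_q')$ of angular length $2\pi/k$. The sector $\Gamma'_q=\Gamma_{q-1\bmod k}\cup\Gamma_q\cup\Gamma_{q+1\bmod k}$ is bounded by the rays at angles $\phi_{q-1}=\phi_q-2\pi/k$ and $\phi_{q+2}=\phi_q+4\pi/k$. The angular distances from $\eta_q$ to these two bounding rays are both equal to $3\pi/k$, so by Lemma \ref{learc} (applied after an obvious rotation/reflection to place the picture in the first quadrant as required) the distance from $c_q$ to the exterior $\bar\Gamma'_q$ equals $\sin(3\pi/k)$. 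This gives $\bar\delta_q\ge \sin(3\pi/k)=|\sin(3\pi/k)|$.

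For part (ii), I would estimate the numerator and denominator of $|t-c_q|/|s-c_q|$ separately. For the numerator: any $t\in\mathcal{A}_q$ lies on the unit circle at some angle within $\pi/k$ of $\eta_q$; since $\phi\mapsto|e^{\phi\sqrt{-1}}-c_q|$ is monotone increasing as $\phi$ moves away from $\eta_q$ on either side, the maximum of $|t-c_q|$ over $t\in\mathcal{A}_q$ is attained at an endpoint, and Lemma \ref{learc} (the distance formula $|c-\tau|=2\sin(\frac{\eta-\phi}{2})$) gives $|t-c_q|\le 2\sin(\pi/(2k))$. For the denominator: part (i) already shows $|s-c_q|\ge \bar\delta_q\ge \sin(3\pi/k)$ for every $s\in\bar\Gamma'_q$. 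Dividing produces the bound
\[
\left|\frac{t-c_q}{s-c_q}\right|\le \frac{2\sin(\pi/(2k))}{\sin(3\pi/k)}=\theta,
\]
which is precisely the $(\theta,c_q)$-separation condition of Definition \ref{defss}. Combining the two estimates completes the proof.
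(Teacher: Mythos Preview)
Your proof is correct and takes essentially the same approach as the paper: both apply Lemma~\ref{learc} with the identification $\chi=\phi_{q-1}$, $\phi=\phi_q$, $\eta=\phi_q+\pi/k$, $\phi'=\phi_{q+1}$, $\chi'=\phi_{q+2}$ (after a rotation into the first quadrant, which you make explicit and the paper leaves implicit), yielding $\bar\delta_q=\sin(3\pi/k)$ and $\max_{t\in\mathcal A_q}|t-c_q|=2\sin(\pi/(2k))$. The paper compresses parts (i) and (ii) into a single appeal to the lemma and lists the wrap-around cases $q\in\{0,k-2,k-1\}$ separately, whereas you handle them uniformly via rotation; otherwise the arguments coincide.
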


\begin{proof}
Suppose that $1\le q\le k-3$. 
Then
$\Gamma'_{q}=\Gamma(\phi_{q-1},\phi_{q+2})$. 
Apply Lemma \ref{learc}, for $\chi=\phi_{q-1}$, $\phi=\phi_q$, $c=c_q$,  $\phi'=\phi'_q=\phi_{q+1}$,
and $\chi'=\phi_{q+2}$,
and obtain the theorem. Similarly prove the theorem in the cases where
 $q=0$, $\Gamma'_0=\Gamma(\phi_{k-1},\phi_{2})$; $q=k-2$
and $\Gamma'_{k-2}=\Gamma(\phi_{k-3},\phi_{0})$, and $q=k-1$
and $\Gamma'_{k-1}=\Gamma(\phi_{k-2},\phi_{1})$.
\end{proof}

Recall that $\sin (y)\approx y$ as $y \approx 0$,
and therefore $\theta\approx 1/3$ provided
 that  the integer 
$k$ is large.
Notice that for every $q$ the admissible block $N_q^{(c)}$
is defined by the knots $t_j$ lying on the arc  $\mathcal A_q$
and the knots $s_i$ lying in the sector $\bar \Gamma'_q$, and
apply Corollary \ref{coss}. For every $q$, $q=0,\dots,k-1$,
 write
$\delta_q=\min_{s_i\in \bar \Gamma'_q} |s_i-c_q|$, then notice that 
$\delta_q\ge \bar\delta_q$,
 and obtain the following result.

\begin{corollary}\label{colrba1}
Assume a sufficiently large integer $k$, $2k<n$,  
and let a uniform $k$-partition of the knot sets $\mathcal S$ and $\mathcal T$
of  an $m\times n$ CV matrix $C$
define $k$ admissible blocks
$N_0^{(c)},\dots,N_{k-1}^{(c)}$.
Then all of them  have the $|E|$-ranks
at most $\rho$,
that is, $C$ is an extended 
$(|E|,\rho)$-neutered matrix,
where $|E|$ and $\rho$
satisfy  bound (\ref{eqappr}) for $\theta\approx 1/3$ and 
$\delta=\min_{q=0}^{k-1}|\delta_q|\ge |\sin (\frac{3\pi}{k})|$.
\end{corollary}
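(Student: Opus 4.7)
The plan is to combine Theorem \ref{thsep} with Corollary \ref{coss} applied block-by-block, so the proof is almost a bookkeeping exercise. First I would recall the structure of the $q$-th admissible block $N_q^{(c)}$: by the definition of the uniform $k$-partition and of the tridiagonal block $\Sigma_q^{(c)}$, the block $N_q^{(c)}$ consists of the entries $\frac{1}{s_i - t_j}$ with $t_j \in \mathcal{A}_q$ and $s_i \in \bar{\Gamma}'_q$, that is, $s_i$ lying outside the union of the sector $\Gamma_q$ and its two neighbors. This is exactly the index configuration to which Theorem \ref{thsep} applies.

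Next I would invoke Theorem \ref{thsep}(ii) at the center $c_q$: for each $q$, the arc $\mathcal{A}_q$ and the exterior sector $\bar{\Gamma}'_q$ are $(\theta, c_q)$-separated with the single value $\theta = 2\sin(\pi/(2k))/\sin(3\pi/k)$, which, for $k$ large, satisfies $\theta \approx 1/3 < 1$. Hence the row-knot set $\{s_i\} \cap \bar{\Gamma}'_q$ and the column-knot set $\{t_j\} \cap \mathcal{A}_q$ defining $N_q^{(c)}$ meet the hypothesis of Corollary \ref{coss}. Applying that corollary with center $c_q$ and any fixed positive integer $\rho$ produces a rank-$\rho$ factorization $F_q G_q^T$ of $N_q^{(c)}$ with entrywise error bounded by $\theta^\rho / ((1-\theta)\delta_q)$, where $\delta_q = \min_{s_i \in \bar{\Gamma}'_q}|s_i - c_q|$.

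Then I would uniformize the distance estimate: by Theorem \ref{thsep}(i) the distance from $c_q$ to the closed sector $\bar{\Gamma}'_q$ is at least $\bar{\delta}_q \ge |\sin(3\pi/k)|$, and since each $s_i \in \bar{\Gamma}'_q$ lies no closer to $c_q$ than the sector itself, $\delta_q \ge \bar{\delta}_q \ge |\sin(3\pi/k)|$. Setting $\delta = \min_q \delta_q$ yields the common lower bound $\delta \ge |\sin(3\pi/k)|$ and the common error bound $|E| \le \theta^\rho/((1-\theta)\delta)$ promised in (\ref{eqappr}), uniformly over $q = 0,\dots,k-1$. Consequently every admissible block $N_q^{(c)}$ has $|E|$-rank at most $\rho$ with the stated $|E|$, which is precisely the definition of $C$ being an extended $(|E|,\rho)$-neutered matrix.

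I do not foresee a real obstacle here; the only mild care needed is the boundary case $q \in \{0, k-2, k-1\}$ where the three-sector union $\Gamma'_q$ wraps around the angle $2\pi$, but this is handled in Theorem \ref{thsep} itself, so the estimate $\delta_q \ge |\sin(3\pi/k)|$ remains valid for every $q$. The assumption $2k < n$ simply guarantees that each arc $\mathcal{A}_q$ actually contains some knots $t_j$, so that the blocks $N_q^{(c)}$ are nontrivially described in terms of the separated knot subsets; it plays no role in the rank bound beyond this.
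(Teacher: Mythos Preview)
Your proposal is correct and follows essentially the same route as the paper: identify the admissible block $N_q^{(c)}$ as the Cauchy block with $t_j\in\mathcal A_q$ and $s_i\in\bar\Gamma'_q$, invoke Theorem~\ref{thsep} for $(\theta,c_q)$-separation with $\theta\approx 1/3$ and the distance bound $\delta_q\ge\bar\delta_q\ge|\sin(3\pi/k)|$, and then apply Corollary~\ref{coss} block-by-block. Your write-up is in fact more detailed than the paper's one-paragraph derivation preceding the corollary, but the ingredients and their order of use are identical.
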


Our $k$-uniform partition of the complex plane 
into $k$ congruent sectors defines a desired partition of 
 CV matrix into $(\theta,c_q)$-separated blocks for $\theta\approx 1/3$
or smaller. 
Trying to extend our results to the more 
general class of Cauchy matrices $C_{\bf s,t}$ whose all knots
$t_j$ lie on the unit circle $\{z:~|z|=1\}$, one may 
consider various other partitions of the complex plane
and apply the following 
extension of Lemma \ref{learc}
and Theorem \ref{thsep}. 

\begin{lemma}\label{lelrba0} 
Assume
the numbers   
$\theta$,
$\phi$, $\phi'$, and $c$ such that 
 $0<\theta<1$, 
 $0\le \phi<\phi'\le 2\pi$,
and $c=\exp(0.5(\phi'+\phi)\sqrt{-1})$ is the midpoint 
of  the arc 
$\mathcal A(\phi,\phi')$. 
Write $r=r(\phi,\phi',\theta)=\frac{2}{\theta}\sin(\frac{\phi'-\phi}{4})$.
Let $D(c,r)=\{z:~|z-c|\le r\}$
denote the disc on the complex plane 
with a center $c$ and a radius $r$ and let 
$\bar D(c,r)=\{z:~|z-c|> r\}$
denotes the exterior of this disc.
Then the two sets 
$ \mathcal A(\phi,\phi')$
and 
$\bar D(c,r)$
are $(\theta,c)$-separated.
\end{lemma}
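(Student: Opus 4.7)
The plan is to verify the separation inequality $|(t-c)/(s-c)| \le \theta$ directly from the definitions for every $t \in \mathcal{A}(\phi,\phi')$ and every $s \in \bar D(c,r)$, by decoupling the numerator and the denominator: bound $|t-c|$ from above using the arc geometry, and bound $|s-c|$ from below using the definition of $\bar D(c,r)$.

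First I would handle the numerator. For $t = \exp(\psi\sqrt{-1})$ with $\phi \le \psi < \phi'$ and $c=\exp(0.5(\phi+\phi')\sqrt{-1})$, the standard identity $|e^{i\alpha}-e^{i\beta}| = 2|\sin((\alpha-\beta)/2)|$ yields
\[
|t-c| \;=\; 2\Big|\sin\!\Big(\tfrac{\psi - 0.5(\phi+\phi')}{2}\Big)\Big|.
\]
Since $|\psi - 0.5(\phi+\phi')| \le (\phi'-\phi)/2$ and $(\phi'-\phi)/4 \le \pi/2$ (using $\phi'-\phi \le 2\pi$), the function $|\sin(\cdot)|$ is monotone on the relevant interval, so $|t-c| \le 2\sin((\phi'-\phi)/4)$. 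This is exactly $\theta \cdot r$.

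Next I would invoke the definition of $\bar D(c,r)$: every $s \in \bar D(c,r)$ satisfies $|s-c| > r$, hence $\theta |s-c| > \theta r = 2\sin((\phi'-\phi)/4)$. Combining with the previous bound,
\[
\left|\frac{t-c}{s-c}\right| \;=\; \frac{|t-c|}{|s-c|} \;\le\; \frac{2\sin((\phi'-\phi)/4)}{|s-c|} \;<\; \theta,
\]
which gives the required $(\theta,c)$-separation by Definition \ref{defss}.

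The only step requiring some care is the monotonicity claim for $|\sin|$ on the interval of length $(\phi'-\phi)/2$ around the origin; this is where the assumption $\phi' - \phi \le 2\pi$ (equivalently, $(\phi'-\phi)/4 \le \pi/2$) is essential, since otherwise the supremum of $|t-c|$ on the arc would not occur at the endpoints and the clean bound $2\sin((\phi'-\phi)/4)$ would fail. I do not expect any further obstacle — the lemma is a direct geometric corollary of the chord-length formula once the radius $r$ is calibrated to absorb the arc-endpoint distance by the factor $\theta$.
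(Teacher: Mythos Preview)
Your argument is correct. The paper states this lemma without proof (it is introduced as an ``extension of Lemma~\ref{learc} and Theorem~\ref{thsep}'' and left to the reader), so there is nothing to compare against; your direct verification via the chord-length identity $|e^{i\alpha}-e^{i\beta}|=2|\sin((\alpha-\beta)/2)|$ and the defining inequality $|s-c|>r$ is exactly the intended elementary check.
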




\subsection{Approximation of a CV matrix by a balanced $\rho$-HSS matrix
and the complexity of   
 approximate computations with CV 
matrices} \label{sfnft} 


Let $\delta^{(h)}$ denote the minimum distance from the centers $c_q$
to the knots $s_i$ lying in the admissible blocks after the $h$th 
recursive merging.
Recall that the  angles $2\pi/k$ of the $k$  congruent sectors
$\Gamma_0,\dots,\Gamma_{k-1}$
are recursively doubled
in every merging. So Lemma \ref{learc} implies that
$\delta^{(h)}\ge \sin(3\pi 2^h/k)$ after the $h$th merging,
$h=1,\dots,l$.
We define the recursive merging by choosing
 the integers $k=2^{l_+}$ and $l<l_+$. Choose them  
such that $k/2^l=2^{l_+-l}\ge 6$.
Then 
$\delta^{(h+1)}>\delta^{(h)}>\delta^{(0)}\ge \delta_-=\sin(\frac{3\pi}{k})$
for~all $h$,
and so $ \delta_-\approx \frac{3\pi}{k}$ for large integers $k$.
Together with  Corollary \ref{colrba1} 
these relationships imply the following result.

\begin{theorem}\label{thbal}
The CV matrix $C$ of Corollary \ref{colrba1} as well as its transpose 
 CV$^T$ matrix $C^T$ are two
extended balanced $(\xi,\rho)$-HSS matrices
where the values $\xi$ and $\rho$ are linked by 
bound (\ref{eqappr}) for 
$|E|=\xi$,
$\theta=2\sin(\frac{\pi}{2k})/\sin(\frac{3\pi}{k})$,
 and
$\delta=\delta_h\ge \delta_-=\sin(\frac{3\pi}{k})$,
so that $\theta \approx 1/3$ and $\delta_-\approx \frac{3\pi}{k}$,
for large integers $k$.
\end{theorem}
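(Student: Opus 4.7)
The plan is to verify the two items of Fact \ref{fahss}(ii) by tracking the admissible unions through every level of the balanced merging tree and bounding the rank-$\rho$ approximation error uniformly across levels. Once this is done for $C$, the analogous result for $C^T$ is immediate because the separation property of Definition \ref{defss} is symmetric in the two knot sets and Corollary \ref{coss} applies to $C^T = \left(\tfrac{1}{t_j - s_i}\right)_{j,i}$ verbatim, up to a sign. Corollary \ref{colrba1} already gives the statement at the finest level $h=0$ of merging; the task is therefore essentially to reapply that argument at every coarser level.

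First I would formalize the merging as follows: at level $h$ of the balanced tree ($h=0,1,\ldots,l$), the $k$ original congruent sectors $\Gamma_0,\ldots,\Gamma_{k-1}$ are grouped into $k_h = k/2^h$ merged sectors $\Gamma_q^{(h)}$, each an arc-union of $2^h$ consecutive original sectors and hence of angular width $2\pi/k_h$. The $q$-th merged tridiagonal block at level $h$ is supported on $\Gamma_{q-1\bmod k_h}^{(h)} \cup \Gamma_q^{(h)} \cup \Gamma_{q+1\bmod k_h}^{(h)}$, and the corresponding merged admissible block is the complement in the $q$-th merged block column. The key point is that this is exactly the configuration of Theorem \ref{thsep} with $k$ replaced by $k_h$: the knots $t_j$ lying in $\Gamma_q^{(h)}$ populate an arc $\mathcal A_q^{(h)}$ of the unit circle of length $2\pi/k_h$ with midpoint $c_q^{(h)}$, while the knots $s_i$ populating the admissible block all lie in $\bar\Gamma_q^{\prime(h)}$.

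Applying Theorem \ref{thsep} at level $h$ (the precondition $k_h\ge 12$ is ensured by the stated choice $k/2^l \ge 6$, together with a routine adjustment for the very coarsest levels), the arc $\mathcal A_q^{(h)}$ and the exterior $\bar\Gamma_q^{\prime(h)}$ are $(\theta_h, c_q^{(h)})$-separated with $\theta_h = 2\sin(\pi/(2k_h))/\sin(3\pi/k_h)$ and the distance from $c_q^{(h)}$ to $\bar\Gamma_q^{\prime(h)}$ is at least $\sin(3\pi/k_h)$. Since $3\pi/k_h = 3\pi 2^h/k \ge 3\pi/k$ and $\sin$ is increasing on the relevant range, we obtain $\delta_h \ge \delta_- = \sin(3\pi/k)$; likewise a short monotonicity check on the function $x\mapsto 2\sin(x/2)/\sin(3x)$ for $x\in(0,\pi/6]$ bounds $\theta_h$ by the advertised $\theta \approx 1/3$ asymptotically. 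Plugging these into Corollary \ref{coss} (equivalently, invoking Corollary \ref{colrba1} level-by-level) shows that at every level $h$ and every $q$ the merged admissible block admits a rank-$\rho$ approximation of error bounded by $\theta_h^\rho/((1-\theta_h)\delta_h) \le \xi$, so $C$ is $(\xi,\rho)$-neutered at every stage and thus balanced $(\xi,\rho)$-HSS by Fact \ref{fahss}(ii).

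The main obstacle I expect is the bookkeeping of the block partitions through the merging tree: the $\rho$-neutered condition must be verified not for the admissible blocks of the original partition alone but for the admissible unions that arise at every intermediate level, and one must be sure that Theorem \ref{thsep} really applies to each of them (with three merged sectors excluded on each side rather than three original sectors). A secondary technical point is the monotonicity estimate on $\theta_h$ across levels, which is where one must check that widening the arc and widening the distance scale together so that $\theta_h$ stays uniformly below $1$; once this is in place, the rest is a direct appeal to the already-established corollaries and to Fact \ref{fahss}.
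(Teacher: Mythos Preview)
Your proposal is correct and follows essentially the same approach as the paper: track the sector angles through the merging levels, observe that at level $h$ the configuration is that of Theorem \ref{thsep} with $k$ replaced by $k_h=k/2^h$, deduce the monotone lower bound $\delta^{(h)}\ge\sin(3\pi 2^h/k)\ge\sin(3\pi/k)$, and invoke Corollary \ref{colrba1} level by level together with Fact \ref{fahss}(ii). The paper's argument is the terse paragraph immediately preceding the theorem statement; you have simply spelled out the same steps more explicitly, including the monotonicity check on $\theta_h$ and the symmetry argument for $C^T$, neither of which the paper makes fully explicit.
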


Combine Corollary \ref{coext1} with  this theorem applied for 
$k=2^{l_+}$ of order $n/\log(n)$, for
$\rho$ and $\log(1/\xi)$ of order $\log(n)$,
and for  $l<l_+$ such that $l_+-l\ge 6$ 
(verify that in this case the assumptions of the corollary are satisfied), and
obtain the following complexity estimates for  CV matrices $C$ and CV$^T$ matrices $C^T$.




\begin{theorem}\label{thalphbtc}
Assume an $m\times n$ CV matrix $C$
and a positive $\xi$
such that $\log(1/\xi)=O(\log(n))$. 
Then
$\alpha_{\xi}(C)=O((m+n)\log^2(n))$.
If in addition
$m=n$ and if the matrix $C$ is $\xi$-approximated by
a hierarchically regular
extended 
 balanced $\rho$-HSS matrix,
then $\beta_{\xi}(C)=O(n\log^3(n))$.
The same bounds hold for the CV$^T$ matrix $C^T$ replacing $C$.
\end{theorem}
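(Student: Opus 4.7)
The plan is to combine Theorem \ref{thbal} (which shows that a CV matrix is an extended balanced $(\xi,\rho)$-HSS matrix) with Corollary \ref{coext1} (which estimates $\alpha$ and $\beta$ for such matrices) by choosing the partition parameter $k$ and the target rank $\rho$ so that the hypotheses $k\rho=O(n)$ and $n_++\rho=O(\log(n))$ of the corollary are met, while the approximation error stays below~$\xi$.

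First I would fix $k=2^{l_+}$ of order $n/\log(n)$ and choose $l<l_+$ with $l_+-l\ge 6$, so that the balanced merging underlying Theorem \ref{thbal} proceeds for $l$ levels with sectors never merging below the separation threshold. By Theorem \ref{thbal} the resulting approximation error of the extended balanced $\rho$-HSS approximation is controlled by bound (\ref{eqappr}) with $\theta\approx 1/3$ and $\delta=\delta_-\approx 3\pi/k$. Since $\log(1/\delta_-)=O(\log(n))$ and $\log(1/\xi)=O(\log(n))$ by hypothesis, taking logarithms in (\ref{eqappr}) shows that it suffices to pick $\rho=O(\log(n))$ to guarantee an overall $\xi$-approximation of $C$ by an extended balanced $\rho$-HSS matrix $\widetilde C$.

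Next I would verify the hypotheses of Corollary \ref{coext1} for $\widetilde C$. The block columns of $C$ induced by the uniform $k$-partition have $n_+\le \lceil n/k\rceil=O(\log(n))$ columns, because the knots $t_j=f\omega^j$ are equispaced on the unit circle; combined with $\rho=O(\log(n))$ this gives $n_++\rho=O(\log(n))$. Likewise $k\rho=O(n/\log(n))\cdot O(\log(n))=O(n)$. Corollary \ref{coext1} (which rests on Theorem \ref{thalphbt}) then yields $\alpha(\widetilde C)=O((m+n)\log^2(n))$ and, when $m=n$ and $\widetilde C$ is hierarchically regular (and well-conditioned), $\beta(\widetilde C)=O(n\log^3(n))$. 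Because $\widetilde C$ $\xi$-approximates $C$, the corresponding approximate matrix--vector product and linear-system solution are within the tolerance required by Definition \ref{defalphabeta} (up to a harmless absorption of condition-number factors into the $O(\log(n))$ bound on $\log(1/\xi)$), proving the bounds $\alpha_\xi(C)=O((m+n)\log^2(n))$ and $\beta_\xi(C)=O(n\log^3(n))$. The CV$^T$ case follows by part (iii) of Theorem \ref{thext}, whose extension to the tridiagonal setting is recorded in Theorem \ref{thalphbt}: the same parameters $k$ and $\rho$ make $C^T$ an extended balanced $(\xi,\rho)$-HSS matrix with analogous block-size bounds.

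The step I expect to require the most care is not the arithmetic of parameter balancing but the justification that the approximation error produced by Corollary \ref{coss} on each admissible block propagates into a genuine $\xi$-approximation of the whole CV matrix suitable for Definition \ref{defalphabeta}. In particular, for the $\beta_\xi$ bound I would need to check that hierarchical regularity (and well-conditioning, inherited from the assumption in the statement) survives the passage from $C$ to its extended balanced $\rho$-HSS approximant, so that Theorem \ref{thalphbt} applies to $\widetilde C$ and the solver cost $O(n\log^3(n))$ is actually realized.
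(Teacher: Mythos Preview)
Your proposal is correct and follows essentially the same route as the paper: the paper's argument is precisely to combine Theorem \ref{thbal} with Corollary \ref{coext1} under the parameter choices $k=2^{l_+}$ of order $n/\log(n)$, $\rho$ of order $\log(n)$, and $l_+-l\ge 6$, and then verify that $k\rho=O(n)$ and $n_++\rho=O(\log(n))$. Your additional concern about hierarchical regularity is unnecessary, since the statement of the theorem already \emph{assumes} that $C$ is $\xi$-approximated by a hierarchically regular extended balanced $\rho$-HSS matrix; you do not have to derive this.
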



\section{Extensions and implementation}\label{scmplcv} 


\subsection{Computations with
 matrices having displacement  structure,
polynomials, and rational functions
}\label{scmplcv1} 


 By combining the algebraic techniques of transformation of matrix structure
of \cite{P90} with the  FMM/HSS techniques,  \cite[Section 9]{P15}
 extends 
the complexity bounds of Theorems \ref{thalphbtc} and  \ref{thcvv1} to
generalized Cauchy matrices $M=(f(s_i-t_j))_{i,j=0}^{m-1,n-1}$
for various functions $f(z)$ such as $z^{-p}$ for a positive integer $p$,
$\ln z$, and $\tan z$,
 to $n\times n$ structured matrices $M$
 having the displacement structures of 
 Toeplitz, Hankel, 
Cauchy and Van\-der\-monde types (cf. also \cite{Pb}),
and in particular to
 Cauchy matrices $M=C_{\bf s,t}$ having arbitrary sets of knots $\mathcal S$ and $\mathcal  T$.
In the latter case  the approximation error bound $\xi$
increases by a factor 
bounded from above by the condition  number $\kappa(M)=||M||~||M^{+}||$,
and the results are readily extended to Problems 3 and 4 of 
multipoint rational evaluation and interpolation.
Next we specify the simpler extension to
computations with a Van\-der\-monde matrix, its transpose, and polynomials.

\begin{theorem}\label{thcvv1}
For a positive $\xi$ and a vector
${\bf s}=(s_i)_{i=0}^{m-1}$, write 
$V=V_{\bf s}$ 
and $s_+=\max_{i=0}^{m-1}|s_i|$.  \\ 
(i) Then 
$\alpha_{\xi}(V)+\alpha_{\xi}(V^T)=
O((m+n)\rho\log^2(n))$
provided that $s_+$ is bounded from above by a constant.  \\
(ii) Suppose that,  
 for $m=n$ and some complex $f$, $|f|=1$, the CV matrix
 $C_{{\bf s},f}$ 
has been $\xi$-approximated by
a hierarchically nonsingular extended 
 balanced $(\xi,\rho)$-HSS matrix. Then 
$\beta_{\xi}(V)+\beta_{\xi}(V^T)=O(n\rho^3\log(n))$.  \\
(iii) One can extend the above bounds 
on $\alpha_{\xi}(V)$ and $\beta_{\xi}(V)$
to the solution of
Problems 1 and 2
of Section \ref{s3}.
\end{theorem}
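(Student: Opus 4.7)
The plan is to reduce each of the four computations $V_{\bf s}{\bf u}$, $V_{\bf s}^T{\bf u}$, $V_{\bf s}^{-1}{\bf u}$, and $(V_{\bf s}^T)^{-1}{\bf u}$ to a single application of a CV matrix (respectively its transpose or inverse), plus a constant number of diagonal scalings and a single FFT or inverse FFT, by exploiting the factorizations (\ref{eqvs1}) and (\ref{eqvs-1}) derived in Section \ref{s34}. The claimed cost bounds then follow directly from Theorem \ref{thalphbtc}, combined with the $O(n\log n)$ cost of FFT and the $O(m+n)$ cost of diagonal scaling.

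For \textbf{part (i)}, I would fix any $f$ with $|f|=1$ and parse (\ref{eqvs1}) as
\begin{equation*}
V_{\bf s}=D_1\,C_{{\bf s},f}\,D_2\,\Omega\,D_3,
\end{equation*}
where $D_1=\frac{f^{1-n}}{\sqrt n}\diag(s_i^n-f^n)_{i=0}^{m-1}$ is $m\times m$ diagonal, and $D_2=\diag(\omega^j)_{j=0}^{n-1}$, $D_3=\diag(f^j)_{j=0}^{n-1}$ are $n\times n$ diagonal. Applied to a vector factor by factor, the three diagonal scalings cost $O(m+n)$, the FFT by $\Omega$ costs $O(n\log n)$, and the CV multiplication costs $\alpha_\xi(C_{{\bf s},f})=O((m+n)\log^2 n)$ by Theorem \ref{thalphbtc} (under $\log(1/\xi)=O(\log n)$). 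Precomputation of the scalars $s_i^n-f^n$ by repeated squaring costs a further $O(m\log n)$, and their moduli are controlled by the hypothesis that $s_+$ is bounded by a constant. To handle $V_{\bf s}^T$, transpose the identity to $V_{\bf s}^T=D_3\,\Omega\,D_2\,C_{{\bf s},f}^T\,D_1$ (using $\Omega^T=\Omega$) and invoke the analogous bound on $\alpha_\xi(C_{{\bf s},f}^T)$ also supplied by Theorem \ref{thalphbtc}. This yields the claimed $O((m+n)\rho\log^2 n)$, with the factor $\rho$ tracking the rank of the underlying HSS approximation.

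For \textbf{part (ii)}, I would set $m=n$ and read (\ref{eqvs-1}) as
\begin{equation*}
V_{\bf s}^{-1}=\sqrt n\,D_3^{-1}\,\Omega^H\,D_2^{-1}\,C_{{\bf s},f}^{-1}\,\widetilde D_1,
\end{equation*}
where $\widetilde D_1=\diag\bigl(f^{n-1}/(s_i^n-f^n)\bigr)_{i=0}^{n-1}$. The dominant cost is the application of $C_{{\bf s},f}^{-1}$, bounded by $\beta_\xi(C_{{\bf s},f})=O(n\log^3 n)$ via the second half of Theorem \ref{thalphbtc}; this is precisely where the hypothesis that $C_{{\bf s},f}$ be $\xi$-approximated by a hierarchically regular (``nonsingular'') extended balanced $(\xi,\rho)$-HSS matrix is invoked. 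The remaining factors are handled by IFFT and $O(n)$ diagonal work. For $V_{\bf s}^{-T}$ I would transpose the identity once more. \textbf{Part (iii)} then follows immediately: Problem 1 of Section \ref{s32} is exactly the evaluation of $V_{\bf s}{\bf p}$, which is covered by the bound on $\alpha_\xi(V)$, while Problem 2 is exactly the evaluation of $V_{\bf s}^{-1}{\bf v}$, which is covered by the bound on $\beta_\xi(V)$.

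\textbf{The main obstacle} is numerical rather than structural. When $s_+$ is only bounded by a constant, the entries of $D_1$ may have moduli as large as $s_+^n+1$, and those of $\widetilde D_1$ can be very small or very large when some $s_i$ lies near an $n$th root of $f^n$. To respect the stated $\xi$-accuracy one must (a) choose $f$ on the unit circle so that $\min_i|s_i^n-f^n|$ is not unduly small, which is achievable by a pigeonhole argument since $f$ ranges freely on the unit circle, and (b) verify that the approximation errors introduced by the five factors compose to a relative error of at most $\xi$ with at most a polylogarithmic blow-up, a routine but nontrivial perturbation calculation that must be carried out once the multiplicative factorization above is fixed.
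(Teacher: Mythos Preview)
Your proposal is correct and follows essentially the same route as the paper: factor $V_{\bf s}$ and $V_{\bf s}^{-1}$ via equations (\ref{eqvs1}) and (\ref{eqvs-1}) and their transposes, invoke Theorem \ref{thalphbtc} for the CV part, and handle the remaining diagonal and DFT factors at negligible cost. The paper's proof is terser but makes the same moves, including the observation that the unitary factors $\diag(\omega^j)$, $\diag(f^j)$, $\Omega$, $\Omega^H$ do not inflate the relative error, and that a suitable choice of $f$ with $|f|=1$ keeps $\min_i|s_i^n-f^n|$ bounded below (the paper quotes $\Delta\le 2n$, which is your pigeonhole argument made explicit).
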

\begin{proof}
With no loss of generality we can assume that $m=n$.
Combine Theorem \ref{thalphbtc},
equations 
(\ref{eqvs1}), (\ref{eqvs-1})  and their transposes. 
The matrices  $\diag(\omega^{j})_{j=0}^{n-1}$,
$\Omega/\sqrt n$, $\Omega^H/\sqrt n$, and
$\diag(f^{j})_{j=0}^{n-1}$
 are unitary, and 
so multiplication by them and by their inverses makes 
no impact on the output error norms.
Multiplication by the matrix $\diag(s_i^n-f^n)_{i=0}^{n-1}$
can increase the value $\xi$ 
 by at most a factor of $1+s_+^n\le 1+|V_{\bf s}|s_+
$,
while multiplication by the inverse of this matrix 
increases $\xi$ by a factor of 
$\Delta=1/\max_{f:~|f|=1}\min_{i=0}^{n-1}|s_i^n-f^n|$,
which is  at most 
 $2n$ 
for a proper choice of the value $f$
such that $|f|=1$. Then the increase
by a factor of $\Delta$ would
 make no impact on the asymptotic bounds of 
Theorem \ref{thcvv1}, and so we complete
 the proof of parts (i)
and (ii). Equations 
of Problem 1
extend the proof to part (iii).
\end{proof}


\subsection{Simplified implementation
}\label{ssmplimpl}


One can implement our algorithms
by computing the centers
$c_q$ and the admissible
blocks $\widehat N_q$ of bounded ranks
in the merging process,  
but can avoid a large part of the
computations
by following the recipe of the papers \cite{CGS07},
\cite{X12},
 \cite{XXG12}, and
 \cite{XXCB14}.
The idea is to bypass the computation of the centers
$c_q$ and immediately
 compute HSS generators for the
 admissible blocks $\widehat N_q$,
defined by HSS trees. The length (size) of the generators at every merging stage
(represented by a fixed level of the tree) can be 
chosen equal to the available  
upper bound on the numerical ranks of these blocks 
or can be adapted empirically.  
See \cite[Section 10.1]{PLSZa} for a recent acceleration of this stage.


\bigskip

\noindent{\huge PART IV: NUMERICAL TESTS AND CONCLUSIONS}



\section{Numerical Experiments}\label{ststs}

Numerical experiments
	have been performed under our supervision 
in the Graduate Center of the City University of New York
by Franklin Lee and Aron Wolinetz
(Section \ref{sexprnk}) and
		by Liang Zhao (Section \ref{sexppoly}).
All computations have been performed with the IEEE standard double precision.
The codes are available upon request.


\subsection{Experimental computation of numerical ranks of the admissible blocks 
of CV matrices}\label{sexprnk}

			The test programs were written
				in Python 3.3.3,
				using the Numpy 1.7.1, Scipy 0.12.1, and Sympy 0.7.3 libraries.
			The tests were run
				on Windows 7 64-bit SP1
				on a Toshiba Satellite L515-S4925
					with a Pentium Dual-Core T4300 @ 2.10GHz x2 processor.
Random numbers were generated uniformly with the language's Mersenne twister over the range
 $\{x:0 \leq x < 1\}$
 and
  extended  to the ranges  $\{y:~a \leq y < b\}$
		for $y=a+(b-a)x$.

For $n=1024, 2048, 4096$
	we computed 
		the vectors $(\omega_j)_{i=0}^{n-1}$
			of the $n$th roots of unity,
	and
		for every pair of $n$ and $h$, $h=0,1,4$,
			we generated 100,000 instances 
					of complex numbers $s_0,
						\dots,s_{n-1}$,
		thus
		defining
			$n\times n$ CV matrices
				$C_{{\bf s},1}=(\frac{1}{s_i-\omega^j})_{i,j=0}^{n-1}$.

We generated
	the knots
			$s_i=|s_i|\exp(\phi_i\sqrt {-1})$
		as follows.
	At  first we
		generated the
			angles $\bar \phi_i$
		over the range  $0\le \bar\phi_i<2\pi$ and
			the values $|s_i|$ over the range $[1-1/2^h,1+1/2^h)$
				for $h=0,1,4$
				and $i=0,\dots,n-1$,
		in all cases
			independently for all $i$ and $t$.
	Then for every vector $(\bar\phi_i)_{i=0}^{k-1}$ we computed the
		permutation matrix $P$ defining
			the  vector $(\phi_i)_{i=0}^{n-1}=P(\bar\phi_i)_{i=0}^{n-1}$
				with the coordinates $\phi_0,\dots,\phi_{n-1}$ 
					in the nondecreasing order.		
For every pair of the vectors
					$(|s_i|)_{i=0}^{n-1}$ and $(\phi_i)_{i=0}^{n-1}$ we 
				defined the vector
					$(s_i)_{i=0}^{n-1}=(|s_i|\exp(\phi_i\sqrt {-1}))_{i=0}^{n-1}$ and
				the CV matrix $C=(\frac{1}{s_i-\omega^j})_{i,j=0}^{n-1}$.
	Then we
		fixed the integers $k=4,32,512,2048$, skipped integer pairs
			$(k,n)$ where $k<2$ or $n/k<2$, and
		defined tridiagonal and
			admissible blocks
		by following  the recipes of Section
			\ref{snrqs}.

Finally we fixed the
tolerances $\xi=10^{-q}$ for $q=2,3,4$
and 
computed
the $\xi$-ranks of nonempty
admissible blocks $N_q^{(c)}$
by applying the rank function 
		 $\text{numpy.linalg.matrix\_rank}(X, tol)$.

Tables \ref{tab0}--\ref{tab4} show the average computed values of 
the $\xi$-ranks in these tests.
They vary rather little, remaining consistently small, when we changed the parameters $h$, 
$k$, and $\xi$,  and they grew very slowly when we doubled the 
matrix dimension $n$.

We also computed the average norms of the admissible blocks. They
ranged between 100 and 1000.

\bigskip

\begin{table}[h]
  \caption{The $\xi$-ranks of the admissible blocks for $h=0$}
  \label{tab0}
  \begin{center}
    \begin{tabular}{| c | c | c | c | c| }
    \hline
$\xi$ & \textbf{$n$}&k=4&k=32&k=512 \\ \hline
0.01 & $1024$ & $5.0$ & $5.0$ & $2.0$   \\ \hline
0.01 & $2048$ & $5.0$ & $5.0$ & $3.0$   \\ \hline
0.01 & $4096$ & $5.0$ & $5.0$ & $3.8$  \\ \hline

0.001 & $1024$ & $6.0$ & $6.0$ & $2.0$   \\ \hline
0.001 & $2048$ & $6.0$ & $6.0$ & $3.8$   \\ \hline
0.001 & $4096$ & $6.0$ & $6.3$ & $4.3$  \\ \hline

0.0001 & $1024$ & $7.0$ & $7.0$ & $2.0$   \\ \hline
0.0001 & $2048$ & $7.0$ & $7.0$ & $4.0$   \\ \hline
0.0001 & $4096$ & $7.0$ & $7.8$ & $5.0$  \\ \hline
  \end{tabular}
  \end{center}
\end{table}


\begin{table}[h]
  \caption{The $\xi$-ranks of the admissible blocks for $h=1$}
  \label{tab1}
  \begin{center}
    \begin{tabular}{| c | c | c | c | c| }
    \hline
$\xi$ & \textbf{$n$}&k=4&k=32&k=512 \\ \hline
0.01 & $1024$ & $4.0$ & $5.0$ & $2.0$   \\ \hline
0.01 & $2048$ & $4.0$ & $5.0$ & $3.4$   \\ \hline
0.01 & $4096$ & $5.0$ & $5.8$ & $4.0$  \\ \hline

0.001 & $1024$ & $5.0$ & $6.0$ & $2.0$   \\ \hline
0.001 & $2048$ & $5.0$ & $6.0$ & $4.0$   \\ \hline
0.001 & $4096$ & $6.0$ & $7.0$ & $4.8$  \\ \hline

0.0001 & $1024$ & $6.0$ & $7.0$ & $2.0$   \\ \hline
0.0001 & $2048$ & $6.0$ & $7.0$ & $4.0$   \\ \hline
0.0001 & $4096$ & $6.0$ & $8.0$ & $5.4$  \\ \hline
  \end{tabular}
  \end{center}
\end{table}

\begin{table}[h]
  \caption{The $\xi$-ranks of the admissible blocks for $h=4$}
  \label{tab4}
  \begin{center}
    \begin{tabular}{| c | c | c | c | c| }
    \hline
$\xi$ & \textbf{$n$}&k=4&k=32&k=512 \\ \hline
0.01 & $1024$ & $4.0$ & $5.0$ & $2.0$   \\ \hline
0.01 & $2048$ & $4.0$ & $5.0$ & $4.0$   \\ \hline
0.01 & $4096$ & $4.0$ & $5.0$ & $5.0$  \\ \hline

0.001 & $1024$ & $4.0$ & $6.0$ & $2.0$   \\ \hline
0.001 & $2048$ & $5.0$ & $6.0$ & $4.0$   \\ \hline
0.001 & $4096$ & $5.0$ & $6.0$ & $5.9$  \\ \hline

0.0001 & $1024$ & $5.0$ & $7.0$ & $2.0$   \\ \hline
0.0001 & $2048$ & $5.0$ & $7.0$ & $4.0$   \\ \hline
0.0001 & $4096$ & $6.0$ & $7.0$ & $6.6$  \\ \hline
  \end{tabular}
  \end{center}
\end{table}




\subsection{Multipoint numerical evaluation of polynomials}\label{sexppoly}

We tested  numerical behavior of our algorithms for 
approximate evaluation of 
 real 
and complex Gaussian random polynomials $p(x)$ of degree 
$n-1$, for $n = 64, 128, 256, 512, 1024, 2048, 4096$,
and  generated the knots of the evaluation lying 
 in the unit disc 
$\{z:~|z|\le 1\}$.

We performed the tests on a Dell server running Windows system 
and using MATLAB R2014a with double precision. 
 We applied the MATLAB function ''randn()'' in order to generate 
the real polynomial coefficients and
the 
real and imaginary parts separately for the complex coefficients.
   
The knots of the evaluation,  
$s_i = r\times\exp(2\pi \theta \sqrt {-1})$,
depended on two parameters $r$ and  $\theta$.
 In all tests
 we defined
 the values $\theta$   
by applying the uniform random number generator 
''rand()'' to the line interval $[0, 1)$,
and we generated the absolute values $r$ in two ways.
 
In one series of our tests we set the absolute value $r$  to 1, 
thus placing the knots $s_i$ onto the unit circle $\{x:~|x|=1\}$,
and then we displayed the test
results 
in Tables \ref{PolyEvalReal} and \ref{PolyEvalComplex}.

In another series of our tests we  generated the  absolute value $r$
at random by applying the uniform random number generator ''rand()'' 
to the line interval $[0, 1]$, 
and then we displayed the test
results 
in Tables \ref{PolyEvalReal2} and \ref{PolyEvalComplex2}.
The latter tests cover polynomial evaluation at the knots lying in 
the unit disc  $\{z:~|z|\le 1\}$, but can be extended to the
evaluation outside it, by shifting from a polynomial $p(x)$ of degree $n$
to the reverse polynomial $x^np(1/x)$.

In all tables  
the columns ``Max. Rank`` represent the maximum 
$\xi$-ranks of the off-tridiagonal blocks in 
the computation, for $\xi = 10^{-5}$. 
The columns ``Error'' represent the absolute difference of our 
computed values of the polynomials and the output 
of the MATLAB function  ''polyval()" for the same inputs. 

All tests have been repeated  100 times for each $n$ and  the average results
 have been displayed.

According to the test results, the computed maximum numerical rank was consistently low, implying that our algorithm ran fast, 
even though it still produced quite accurate output values.

For comparison, Table \ref{SuperfastEval} displays the mean values and standard deviations
of the output errors observed in 
our test of the polynomial evaluation algorithm of \cite{MB72}
applied to the same inputs and also with the IEEE standard double precision.
According to these results, the algorithm has
consistently performed with much inferior output accuracy for polynomials of degree 32 and higher. 

\begin{table}[h]
\caption{Evaluation of Real Gaussian Polynomials on the Unit Circle}
\label{PolyEvalReal}
\begin{center}
\begin{tabular}{|c|c|c|}
\hline
Degree	&	Max. Rank	&	Error	\\ \hline
32	&	13	&$	6.60\times 10^{-07	}$\\ \hline
64	&	11	&$	8.05\times 10^{-08	}$\\ \hline
128	&	12	&$	5.88\times 10^{-07	}$\\ \hline
256	&	12	&$	4.01\times 10^{-07	}$\\ \hline
512	&	12	&$	2.27\times 10^{-07	}$\\ \hline
1024	&	12	&$	5.77\times 10^{-08	}$\\ \hline
2048	&	13	&$	1.38\times 10^{-06	}$\\ \hline
4096	&	13	&$	2.99\times 10^{-05	}$\\ \hline

\end{tabular}
\end{center}
\end{table}

\begin{table}[h]
\caption{Evaluation of Real Gaussian Polynomial in the Unit Disk}
\label{PolyEvalReal2}
\begin{center}
\begin{tabular}{|c|c|c|}
\hline
Degree	&	Max. Rank	&	Error	\\ \hline
32	&	18	&$	1.90\times 10^{-06	}$\\ \hline
64	&	13	&$	1.47\times 10^{-06	}$\\ \hline
128	&	13	&$	1.13\times 10^{-06	}$\\ \hline
256	&	12	&$	9.09\times 10^{-07	}$\\ \hline
512	&	13	&$	7.05\times 10^{-07	}$\\ \hline
1024	&	12	&$	5.49\times 10^{-07	}$\\ \hline
2048	&	13	&$	4.67\times 10^{-07	}$\\ \hline
4096	&	13	&$	3.80\times 10^{-07	}$\\ \hline
\end{tabular}
\end{center}
\end{table}

\begin{table}[h]
\caption{Evaluation of Complex Gaussian Polynomials on the Unit Circle}
\label{PolyEvalComplex}
\begin{center}
\begin{tabular}{|c|c|c|}
\hline
Degree	&	Max. Rank	&	Error	\\ \hline
32	&	12	&$	5.68\times 10^{-08	}$\\ \hline
64	&	11	&$	5.05\times 10^{-07	}$\\ \hline
128	&	12	&$	1.41\times 10^{-07	}$\\ \hline
256	&	11	&$	1.42\times 10^{-07	}$\\ \hline
512	&	12	&$	2.73\times 10^{-07	}$\\ \hline
1024	&	12	&$	5.34\times 10^{-08	}$\\ \hline
2048	&	13	&$	5.18\times 10^{-06	}$\\ \hline
4096	&	13	&$	1.62\times 10^{-04	}$\\ \hline

\end{tabular}
\end{center}
\end{table}

\begin{table}[h]
\caption{Evaluation of Complex Gaussian Polynomial in the Unit Disk}
\label{PolyEvalComplex2}
\begin{center}
\begin{tabular}{|c|c|c|}
\hline
Degree	&	Max. Rank	&	Error	\\ \hline
32	&	18	&$	1.77\times 10^{-06	}$\\ \hline
64	&	13	&$	1.39\times 10^{-06	}$\\ \hline
128	&	13	&$	1.16\times 10^{-06	}$\\ \hline
256	&	12	&$	8.71\times 10^{-07	}$\\ \hline
512	&	12	&$	6.97\times 10^{-07	}$\\ \hline
1024	&	12	&$	5.40\times 10^{-07	}$\\ \hline
2048	&	13	&$	4.73\times 10^{-07	}$\\ \hline
4096	&	13	&$	3.86\times 10^{-07	}$\\ \hline
\end{tabular}
\end{center}
\end{table}

\begin{table}[h]
\caption{Polynomial Evaluation by Using the Algorithm of \cite{MB72}} 
$~~~~~~~~~~~~~~~~~~~~~~~~~~~~~~~~~~~~~~~~~~~~~$(the  entry ``Inf" means ``beyond the range")
\label{SuperfastEval}
\begin{center}
\begin{tabular}{|*{6}{c|}}
\hline
 	& \multicolumn{2}{|c|}{Real Gaussian } & \multicolumn{2}{c|}{Complex Gaussian} \\ \hline
Degree 	&	 mean 	&	 std 	&	 mean 	&	 std 	\\ \hline
16	&$	5.19\times 10^{-09	}$&$	1.21\times 10^{-08	}$&$	8.91\times 10^{-11	}$&$	6.50\times 10^{-11	}$\\ \hline
32	&$	4.54\times 10^{-02	}$&$	6.72\times 10^{-02	}$&$	1.66\times 10^{-03	}$&$	8.86\times 10^{-04	}$\\ \hline
64	&$	9.47\times 10^{+21	}$&$	2.99\times 10^{+22	}$&$	2.96\times 10^{+11	}$&$	1.22\times 10^{+11	}$\\ \hline
128	&$	2.87\times 10^{+53	}$&$	7.21\times 10^{+53	}$&$	2.12\times 10^{+164	}$&$	   Inf	$\\ \hline

\end{tabular}
\end{center}
\end{table} 

\clearpage


\section{Conclusions}\label{scnc}

  
The papers  \cite{MRT05}, \cite{CGS07},
\cite{XXG12}, and \cite{XXCB14}
combine the FMM/HSS techniques with the transformation of matrix structures
(traced back to \cite{P90}) in order to
devise
fast
algorithms that approximate the solution of
Toeplitz, Hankel, Toep\-litz-like, and Han\-kel-like linear systems
of equations by using nearly linear number of arithmetic 
operations  performed with bounded precision.
We   
yielded similar results (that is, 
used nearly linear number of arithmetic 
operations  performed with bounded precision)
for 
multiplication of Van\-der\-monde and Cauchy matrices
by a vector, the solution of linear systems of equations with these matrices,
and polynomial 
multipoint
evaluation and interpolation.
This can be compared with
quadratic  arithmetic time of the known algorithms.
The more involved techniques of 2D FMM should help to decrease our upper bounds $\alpha(M)$
by a logarithmic factor
 (cf. \cite[Section 3.6]{B10}).

Our Section \ref{scmplcv1} and the papers \cite{P15} and \cite{Pa}  cover some extensions
of our techniques and results to computations with other structured matrices
and rational functions.
Our study also promises a natural extension
 to the important class of polynomial  Van\-der\-monde matrices,
$V_{\bf P,s}=(p_j(x_i))_{i,j=0}^{m-1,n-1}$,
where ${\bf P}=(p_j(x))_{j=0}^{n-1}$
is any basis in the space of polynomials of degree less than $n$.
This extension should exploit the following
generalization of our equation (\ref{eqfhr}), 
which 
reproduces   \cite[equation (3.6.8)]{P01},

$$C_{\bf s,t}=\diag(l(s_i)^{-1})_{i=0}^{m-1}V_{\bf P,s}V^{-1}_{\bf P,t}\diag(l'(t_j))_{j=0}^{n-1},~l(x)=\prod_{j=0}^{n-1}(x-t_j).$$

For a natural  further direction, we plan to recast our algorithms
into the form of algorithms for computations with H and H2 matrices.
This will enable us to apply the efficient subroutines
 available in the HLib library developed at the
Max Planck Institute for Mathematics in the Sciences by L. Grasedyck and
S. B\"{o}rm, www.hlib.org, and in the H2Lib, http://www.h2lib.org/, 
https://github.com/H2Lib/H2Lib.



{\bf Acknowledgements:}
Our research has been supported by the NSF Grants CCF 1116736 and CCF-1563942 and
PSC CUNY Awards 67699-00 45 and 
68862--00 46.
We also greatly appreciate reviewers'  thoughtful and helpful comments.




\end{document}